\documentclass{amsart}

\textwidth	480pt
\textheight	645pt
\oddsidemargin	0pt
\evensidemargin 0pt
\topmargin	-10pt

\newcommand{\w}{\omega}
\newcommand{\IN}{\mathbb N}
\newcommand{\IZ}{\mathbb Z}
\newcommand{\IR}{\mathbb R}
\newcommand{\asdim}{\mathrm{asdim}}
\newcommand{\diam}{\mathrm{diam}}
\newcommand{\St}{\mathcal St}
\newcommand{\T}{\mathcal T}
\newcommand{\macrodim}{\mathrm{asdim}}
\newcommand{\I}{\mathcal I}

\newcommand{\E}{\mathcal E}

\newcommand{\F}{\mathcal F}
\newcommand{\mesh}{\mathrm{mesh}}
\newcommand{\U}{\mathcal U}

\newcommand{\e}{\varepsilon}

\newcommand{\Ra}{\Rightarrow}

\newcommand{\Tau}{\mathcal T}
\newcommand{\conv}{\mathrm{conv}}
\newcommand{\Lip}{\mathrm{Lip}}

\newtheorem{theorem}{Theorem}[section]
\newtheorem{proposition}[theorem]{Proposition}
\newtheorem{corollary}[theorem]{Corollary}
\newtheorem{problem}[theorem]{Problem}

\newtheorem{claim}[theorem]{Claim}
\theoremstyle{definition}

\newtheorem{definition}[theorem]{Definition}
\newtheorem{remark}[theorem]{Remark}

\title[Asymptotic dimension and small subsets in locally compact groups]{Asymptotic dimension and small subsets\\ in locally compact topological groups}
\author{Taras Banakh, Ostap Chervak, Nadya Lyaskovska}
\address{Jan Kochanowski University, Kielce, Poland, and Ivan Franko National University of Lviv, Ukraine}
\email{t.o.banakh@gmail.com, oschervak@gmail.com, lyaskovska@yahoo.com}
\subjclass{20F65; 20F69; 54F45; 55M10}
\keywords{Asymptotic dimension, locally compact group, coarse structure, small set}

\begin{document}
\begin{abstract} We prove that for a coarse space $X$ the ideal $\mathcal S(X)$ of small subsets of $X$ coincides with the ideal $\mathcal D_<(X)=\{A\subset X:\asdim(A)<\asdim(X)\}$ provided that $X$ is coarsely equivalent to an Euclidean space $\IR^n$. Also we prove that for a locally compact Abelian group $X$, the equality $\mathcal S(X)=\mathcal D_<(X)$ holds if and only if the group $X$ is compactly generated.
\end{abstract}
\maketitle

\section{Introduction}

In this paper we study the interplay between the ideal $\mathcal S(X)$
of small subsets of a coarse space $X$ and the ideal $\mathcal D_{<}(X)$ of subsets of asymptotic dimension less than $\asdim(X)$ in $X$.
We show that these two ideals coincide in spaces that are coarsely equivalent to $\IR^n$, in particular, they coincide in each compactly generated locally compact abelian group.

Let us recall that a {\em coarse space} is a pair $(X,\mathcal E)$ consisting of a set $X$ and a coarse structure $\mathcal E$ on $X$, which is a family of subsets of $X\times X$ (called {\em entourages}) satisfying the following axioms:
\begin{itemize}
\item[(A)] each $\varepsilon \in\mathcal E$ contains the diagonal $\Delta_X=\{(x,y)\in X^2:x=y\}$ and is {\em symmetric} in the sense that $\e=\e^{-1}$ where $\e^{-1}=\{(y,x):(x,y)\in\e\}$;
\item[(B)] for any entourages $\e,\delta\in\mathcal E$ there is an entourage $\eta\in\E$ that contains the composition\newline  $\delta\circ\e=\{(x,z)\in X^2:\exists y\in X$ with $(x,y)\in\e$ and $(y,z)\in\delta\}$;
\item[(C)] a subset $\delta\subset X^2$ belongs to $\E$ if $\Delta_X\subset\delta=\delta^{-1}\subset\e$ for some $\e\in\E$.
\end{itemize}
A subfamily $\mathcal B\subset\E$ is called a {\em base} of the coarse structure $\E$ if $$\E=\{\e\subset X^2:\exists \delta\in\mathcal B\mbox{ with }\Delta_X\subset\e=\e^{-1}\subset\delta\}.$$ A family $\mathcal B$ of subsets of $X^2$ is a base of a (unique) coarse structure if and only if it satisfies the axioms (A),(B).

Each subset $A$ of a coarse space $(X,\E)$ carries the induced coarse structure $\E_A=\{\e\cap A^2:\e\in\E\}$. Endowed with this structure, the space $(A,\E_A)$ is called a {\em subspace} of $(X,\E)$.

For an entourage $\e\subset X^2$, a point $x\in X$, and a subset $A\subset X$ let $B(x,\e)=\{y\in X:(x,y)\in\e\}$ be the {\em $\e$-ball} centered at $x$, $B(A,\e)=\bigcup_{a\in A}B(a,\e)$ be the $\e$-neighborhood of $A$ in $X$, and $\diam(A)=A\times A$ be the {\em diameter} of $A$. For a family $\U$ of subsets of $X$ we put $\mesh(\U)=\bigcup_{U\in\U}\diam(U)$.
\smallskip

Now we consider two basic examples of coarse spaces. The first of them is any metric space $(X,d)$ carrying
 the {\em metric coarse structure} whose base consists of the entourages $\{(x,y)\in X^2:d(x,y)<\e\}$ where $0\le\e<\infty$. A coarse space is {\em metrizable} if its coarse structure is generated by some metric.

The second basic example is a topological group $G$ endowed with the {\em left coarse structure} whose base consists of the entourages $\{(x,y)\in G^2:x\in yK\}$ where $K=K^{-1}$ runs over compact symmetric subsets of $G$ that contain the identity element $1_G$ of $G$. Let us observe that the left coarse structure on $G$ coincides with the metric coarse structure generated by any left-invariant continuous metric $d$ on $G$ which is {\em proper} in the sense that each closed ball $B(e,R)=\{x\in G:d(x,e)\le R\}$ is compact. In particular, the coarse structure on $\IR^n$, generated by the Euclidean metric coincides with the left coarse structure of the Abelian topological group $\IR^n$.

Now we recall the definitions of large and small sets in coarse spaces. Such sets were introduced in \cite{BM} and studied in \cite[\S11]{PB}. A subset $A$ of a coarse space $(X,\E)$ is called
\begin{itemize}
\item {\em large} if $B(A,\e)=X$ for some $\e\in\E$;
\item {\em small} if for each large set $L\subset X$ the set $L\setminus A$ remains large in $X$.
\end{itemize}

It follows that the family $\mathcal S(X)$ of small subsets of a coarse space $(X,\E)$ is an  ideal. A subfamily $\I\subset\mathcal P(X)$ of the power-set of a set $X$ is called an {\em ideal} if $\I$ is {\em additive} (in the sense that $A\cup B\in\I$ for all $A,B\in\I$) and
{\em downwards closed} (which means that $A\cap B\in\I$ for all $A\in\I$ and $B\subset X$).

Small sets can be considered as coarse counterparts of nowhere dense subsets in topological spaces, see \cite{BL}.
It is well-known \cite[7.4.18]{En} that the ideal of nowhere dense subsets in an Euclidean space $\IR^n$ coincides with the ideal generated by closed subsets of topological dimension $<n$ in $\IR^n$. The aim of this paper is to prove a coarse counterpart of this fundamental fact.

For this we need to recall \cite[9.4]{Roe} the definition of the asymptotic dimension $\asdim(X)$ of a coarse space $X$.

\begin{definition}\label{d1.1} The {\em asymptotic dimension} $\asdim(X)$ of a coarse space $(X,\E)$ is the smallest number $n\in\w$ such that for each entourage $\e\in\E$ there is a cover $\U$ of $X$ such that $\mesh(\U)\subset\delta$ for some $\delta\in\E$ and each $\e$-ball $B(x,\e)$, $x\in X$, meets at most $n+1$ sets $U\in\U$. If such a number $n\in\w$ does not exist, then we put $\asdim(X)=\infty$.
\end{definition}

 In Theorem~\ref{t2.7} we shall prove that $$\asdim(A\cup B)\le\max\{\asdim(A),\asdim(B)\}$$ for any subspaces $A,B$ of a coarse space $X$. This implies that for every number $n\in\w\cup\{\infty\}$ the family $\{A\subset X:\asdim(A)<n\}$ is an ideal in $\mathcal P(X)$. In particular, the family
$$\mathcal D_<(X)=\{A\subset X:\asdim(A)<\asdim(X)\}$$ is an ideal in $\mathcal P(X)$. According to \cite[9.8.4]{BS}, $\asdim(\IR^n)=n$ for every $n\in\w$.

The main result of this paper is:

\begin{theorem}\label{main} For every $n\in\IN$ the ideal $\mathcal S(X)$ of small subsets in the space $X=\IR^n$ coincides with the ideal $\mathcal D_{<}(X)$.
\end{theorem}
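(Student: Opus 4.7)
The plan is to prove the theorem by first establishing a key geometric characterization: for $A\subset\IR^n$, the condition $\asdim(A)\le n-1$ is equivalent to the \emph{hole set} $X_R:=\{x\in\IR^n : B(x,R)\cap A=\emptyset\}$ being large in $\IR^n$ for every $R>0$. Once both directions of this equivalence are established, the theorem reduces to connecting the hole-set condition to smallness, which I carry out in parallel.

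For the direction $\asdim(A)\le n-1\Rightarrow X_R$ is large for every $R$, I would argue by contradiction using Lebesgue's covering theorem. Suppose some $X_R$ fails to be large; then $B(A,R)$ contains balls $B(y_D,D)$ of arbitrarily large radius $D$. Take a cover $\U$ of $A$ witnessing $\asdim(A)\le n-1$, with $\mesh(\U)\le M$ and multiplicity $\le n$ at scale $2R$. Thickening yields $\V:=\{B(U,R)\cap B(y_D,D):U\in\U\}$, a cover of $B(y_D,D)$ with mesh $\le M+2R$ and multiplicity $\le n$; for $D$ sufficiently large this contradicts Lebesgue's theorem, which forces any cover of an $n$-dimensional Euclidean ball by small-diameter sets to have multiplicity $\ge n+1$. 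Once $X_R$ is known large for every $R$, smallness of $A$ follows at once: for any large $L$ (an $\e$-net) and any $y\in\IR^n$, take $R>\e$, pick $x\in X_R$ within the density parameter of $y$, and then any $\ell\in L\cap B(x,\e)$ lies in $L\setminus A$ close to $y$, so $L\setminus A$ is large.

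For the reverse direction $X_R$ large for every $R\Rightarrow\asdim(A)\le n-1$---the main technical step---I would carry out an anti-\v{C}ech construction. Start with a standard simplicial cover of $\IR^n$ of mesh $O(R)$ whose $(n+1)$-multiplicity stratum is the vertex set of a lattice, and use the largeness of $X_R$ to shift the cover so that every vertex lands in $X_R$; the restriction of the shifted cover to $A$ then has multiplicity $\le n$ at scale $R$, witnessing $\asdim(A)\le n-1$. Granted the full characterization, the contrapositive of the theorem's remaining inclusion follows: $\asdim(A)=n$ means some $X_R$ is not large, so $B(A,R)$ contains balls $B(y_D,D)$ of unbounded radius. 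To conclude $A$ is not small, take any net $M$ of $\IR^n$ and define $L$ by replacing each $m\in M\cap B(A,R)$ with a point of $A$ within distance $R$ of $m$ (a nearest-$A$-point projection), keeping $m$ otherwise. Then $L$ is a net of $\IR^n$ (within Hausdorff distance $R$ of $M$), but $L\setminus A=M\cap X_R$ fails to be large since $X_R$ does, witnessing failure of smallness of $A$.

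The main obstacle is the anti-\v{C}ech shifting construction: one must align the rigid vertex lattice of the standard cover with the merely net-dense hole set $X_R$, which requires a careful pigeonhole or perturbation argument and constitutes the heart of the proof.
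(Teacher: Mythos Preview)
Your overall architecture is sound and closely parallels the paper. The hole-set condition ``$X_R$ is large for every $R$'' is exactly what the paper extracts from smallness (encoded there as the function $\varphi$) at the start of the hard direction, so your intermediate characterization is the right pivot. Your Lebesgue-based argument for the implication $\asdim(A)\le n-1\Rightarrow X_R$ large, and hence for $\D_<(\IR^n)\subset\mathcal S(\IR^n)$, is a valid and more elementary alternative to the paper's route, which instead invokes the general group-theoretic Theorem~\ref{t1.6a}; one only needs a little care about closed versus open covers when invoking the Lebesgue covering theorem (pass to a finite subcover of the large ball and shrink before taking closures, or simply note that $A$ and $B(A,R)$ are coarsely equivalent and use $\asdim(\IR^n)=n$).

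The genuine gap is exactly where you locate it. Your phrase ``shift the cover so that every vertex lands in $X_R$'' cannot be read as a rigid translation: $X_R$ being large (merely a net) does not force it to contain any translate of a lattice, so no pigeonhole on translates can succeed. What is actually required is an \emph{individual} perturbation of each point of the $(n{+}1)$-multiplicity locus, together with a quantitative estimate guaranteeing that, after perturbation, the $\delta$-thickened $(n{+}1)$-fold intersection of the cover elements stays inside a controlled ball around the perturbed point. The paper carries this out on the dual picture: the cover is by (modified) barycentric stars $\St'(v)$ of the vertices of a scaled triangulation $\widetilde\T$, so the $(n{+}1)$-multiplicity locus consists of the \emph{barycenters} of the top simplices, not the vertices. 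Each barycenter $b_\sigma$ is then moved independently to a nearby hole $b'_\sigma$ with $B(b'_\sigma,L\delta)\cap A=\emptyset$, and the key estimate (Claim~\ref{cl6}, resting on the bi-Lipschitz bound of Claim~\ref{cl5} for $b_\Delta$-affine maps) gives
\[
\sigma\cap\bigcap_{v\in\sigma^{(0)}}B\big(\St'_{\sigma,b'_\sigma}(v),\delta\big)\subset B(b'_\sigma,L\delta)\subset\IR^n\setminus A.
\]
This is precisely the ``careful perturbation argument'' you flag as missing, and it carries the real content of the proof; your plan becomes complete once you supply the analogue of Claims~\ref{cl5}--\ref{cl6} for whichever cover you choose.
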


Theorem~\ref{main} will be proved in Section~\ref{s:main} with help of some tools of combinatorial topology.
In light of this theorem the following problem arises naturally:

\begin{problem}\label{pr1.3} Detect coarse spaces $X$ for which $\mathcal S(X)=\mathcal D_<(X)$.
\end{problem}

It should be mentioned that the class of coarse spaces $X$ with $\mathcal S(X)=\mathcal D_<(X)$ is closed under coarse equivalences.

A function $f:X\to Y$ between two coarse spaces $(X,\E_X)$ and $(Y,\E_Y)$ is called
\begin{itemize}
\item {\em coarse} if for each $\delta_X\in\E_X$ there is $\e_Y\in\E_Y$ such that for any pair $(x,y)\in\delta_X$ we get $(f(x),f(y))\in\e_Y$;
\item a {\em coarse equivalence} if $f$ is coarse and there is a coarse map $g:Y\to X$ such that $\{(x,g\circ f(x)):x\in X\}\subset \e_X$ and $\{(y,f\circ g(y)):y\in Y\}\subset\e_Y$ for some entourages $\e_X\in\E_X$ and $\e_Y\in\E_Y$.
\end{itemize}

Two coarse spaces $X,Y$ are called {\em coarsely equivalent} if there is a coarse equivalence $f:X\to Y$.

\begin{proposition}\label{coarse} Assume that coarse spaces $X,Y$ be coarsely equivalent. Then
\begin{enumerate}
\item $\asdim(X)=\asdim(Y)$;
\item $\mathcal D_<(X)=\mathcal S(X)$ if and only if $\mathcal D_{<}(Y)=\mathcal S(Y)$.
\end{enumerate}
\end{proposition}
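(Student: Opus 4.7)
The plan is to establish both statements by transferring structures between $X$ and $Y$ via the coarse inverse $g:Y\to X$ of $f$. Part (1) and the invariance of the ideal $\mathcal D_<$ are relatively routine; the genuine technical content lies in proving that $\mathcal S$ is preserved under coarse equivalences.

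For part (1), I would prove $\asdim(Y)\le\asdim(X)$ and invoke symmetry. Given $\e_Y\in\E_Y$, coarseness of $g$ yields $\e_X\in\E_X$ with $(g\times g)(\e_Y)\subset\e_X$, and a cover $\U$ of $X$ witnessing $\asdim(X)\le n$ for $\e_X$ pulls back to $\V:=\{g^{-1}(U):U\in\U\}$, a cover of $Y$. The $(n+1)$-multiplicity bound on $\e_Y$-balls transfers because $g$ sends such balls into $\e_X$-balls; the mesh $\mesh(\V)$ is controlled in $\E_Y$ by pushing $\mesh(\U)$ forward through $f$ and composing with the entourage witnessing closeness of $fg$ to the identity on $Y$.

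For part (2), the scheme is to show that $f$ identifies both ideals, i.e.\ $A\in\mathcal D_<(X)$ iff $f(A)\in\mathcal D_<(Y)$, and similarly for $\mathcal S$; the final equivalence then follows by the substitution $B\leftrightarrow g(B)$ for $B\subset Y$. The invariance of $\mathcal D_<$ is immediate from part (1) applied to the coarse equivalence $f|_A:A\to f(A)$ together with $\asdim(X)=\asdim(Y)$. For $\mathcal S$ I would first establish that $L\subset X$ is large iff $f(L)\subset Y$ is large, by the same transfer trick as in part (1) combined with the closeness of $fg$ to the identity.

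Passing from largeness to smallness is where the main obstacle sits. Given a large $M\subset Y$ and $A\in\mathcal S(X)$, the natural move is to take $L:=g(M)$ (large), apply smallness of $A$ to get $L\setminus A$ large, and push forward by $f$. However, $f$ may identify a point of $X\setminus A$ with a point of $A$, so $f(L\setminus A)$ need not be contained in $M\setminus f(A)$. My plan is to instead remove from $L$ a suitable bounded thickening $A^+:=B(A,\tau_X)$ of $A$, chosen so that any $x\in L\setminus A^+$ forces $f(x)\notin f(A)$: otherwise $f(x)=f(a)$ for some $a\in A$ would make both $x$ and $a$ close to $gf(x)=gf(a)$, hence close to each other, contradicting $x\notin A^+$. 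Together with the closeness of $fg$ to the identity this shows $f(L\setminus A^+)\subset B(M\setminus f(A),\sigma_Y)$, whose largeness forces $M\setminus f(A)$ to be large. The entire argument thus reduces to the auxiliary lemma that bounded thickenings preserve smallness, namely $A\in\mathcal S(X)$ implies $B(A,\tau_X)\in\mathcal S(X)$. I expect this lemma---presumably drawn from \cite{BL}---to be the genuine technical hurdle; once it is in hand, the remainder of part (2) becomes a routine chase.
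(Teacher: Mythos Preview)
Your plan is correct and follows essentially the same architecture as the paper's proof: the paper also reduces part~(2) to the three claims that (i) $\asdim(A)=\asdim(f(A))$, (ii) $A$ is large iff $f(A)$ is large, and (iii) $A$ is small iff $f(A)$ is small, with (iii) hinging on the auxiliary fact that bounded thickenings preserve smallness. Two small remarks. First, for part~(1) the paper uses the coloring characterization of $\asdim$ (pulling back a coloring $\chi_Y$ to $\chi_X=\chi_Y\circ f$) rather than pulling back covers via $g^{-1}$; both are standard and your version works. Second, the thickening lemma you flag as the ``genuine technical hurdle'' is in fact not drawn from \cite{BL} but proved on the spot in two lines: given $L$ large, set $L'=(L\setminus B(A,\e_X))\cup A$; then $L\subset B(L',\e_X)$ forces $L'$ large, and smallness of $A$ gives that $L'\setminus A=L\setminus B(A,\e_X)$ is large. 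So the step you anticipated as the main obstacle is actually the easiest one.
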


This proposition will be proved in Section~\ref{s1.4}. Combined with Theorem~\ref{main} it implies:

\begin{corollary}\label{c1.5} If a coarse space $X$ is coarsely equivalent to an Euclidean space $\IR^n$, then $\mathcal D_{<}(X)=\mathcal S(X)$.
\end{corollary}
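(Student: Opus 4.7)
The plan is to deduce Corollary~\ref{c1.5} directly from the two results that precede it, namely Theorem~\ref{main} and Proposition~\ref{coarse}(2). Since both of these results are already stated in the excerpt, the proof of the corollary should be a short three-line combination, with essentially no new ideas required beyond observing how the hypotheses line up.

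More precisely, suppose $X$ is coarsely equivalent to $\IR^n$ for some $n\in\IN$. First I would invoke Theorem~\ref{main} applied to $\IR^n$ itself, which yields $\mathcal S(\IR^n)=\mathcal D_<(\IR^n)$. Next I would invoke Proposition~\ref{coarse}(2) with $Y=\IR^n$: since $X$ and $\IR^n$ are coarsely equivalent, the equivalence ``$\mathcal S(X)=\mathcal D_<(X)$ iff $\mathcal S(\IR^n)=\mathcal D_<(\IR^n)$'' holds. Combining these two statements gives $\mathcal S(X)=\mathcal D_<(X)$, which is the desired conclusion.

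The only subtlety worth flagging is a case distinction on $n$: if $n=0$, then $\IR^0$ is a single point, so any coarse space coarsely equivalent to it is bounded and both ideals consist of all subsets (or one has to interpret ``$\asdim<0$'' by convention); for $n\ge 1$ the argument above is completely uniform. There is no genuine obstacle here because all the work has been pushed into Theorem~\ref{main} (the combinatorial-topological core of the paper) and Proposition~\ref{coarse} (the invariance of the relevant ideals under coarse equivalence, proved later in Section~\ref{s1.4}). Consequently the proof of Corollary~\ref{c1.5} should consist of just a few sentences assembling these two ingredients.
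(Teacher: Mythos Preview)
Your proposal is correct and matches the paper's own approach exactly: the paper states that Proposition~\ref{coarse} ``combined with Theorem~\ref{main}'' implies Corollary~\ref{c1.5}, which is precisely the two-step argument you outline. The $n=0$ case you flag is not addressed in the paper (Theorem~\ref{main} is stated for $n\in\IN$), but this is a harmless boundary case and does not affect the validity of your argument.
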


Problem~\ref{pr1.3} can be completely resolved for locally compact Abelian topological groups $G$, endowed with their left coarse structure. First we establish the following general fact:

\begin{theorem}\label{t1.6a} For each topological group $X$ endowed with its left coarse structure we get $\mathcal D_<(X)\subset \mathcal S(X)$.
\end{theorem}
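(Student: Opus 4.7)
The plan is to deduce Theorem~\ref{t1.6a} via two reductions: a group-theoretic lemma rephrasing smallness in terms of the complements of compact thickenings, and a structural claim on asymptotic dimension in topological groups.

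First I would establish the following reduction, valid in any topological group $X$ with its left coarse structure: a subset $A\subset X$ is small whenever $X\setminus AK$ is large in $X$ for every compact symmetric neighbourhood $K\ni 1_G$. Given a large set $L$ with $LK_0=X$, the hypothesis applied with $K=K_0$ produces a compact $K_1$ with $(X\setminus AK_0)K_1=X$. For an arbitrary $x\in X$ one writes $x=yk_1$ with $y\in X\setminus AK_0$ and $k_1\in K_1$; since $y\in LK_0$ one has $\ell:=yk_0^{-1}\in L$ for some $k_0\in K_0$, and $y\notin AK_0$ forces $\ell\in yK_0^{-1}\subset X\setminus A$. Hence $x=\ell k_0k_1\in(L\setminus A)(K_0K_1)$, so $L\setminus A$ is large.

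For $A\in\D_<(X)$ and any compact $K$, the thickening $AK=B(A,E_K)$ is coarsely equivalent to $A$, so $\asdim(AK)=\asdim(A)<\asdim(X)$ and $AK\in\D_<(X)$. The first reduction therefore turns Theorem~\ref{t1.6a} into the assertion: \emph{for every $B\in\D_<(X)$, the complement $X\setminus B$ is large in $X$}. Arguing by contrapositive, assume $X\setminus B$ is not large. Then for every compact symmetric $K\ni 1_G$ there is $x_K\in X$ with $x_K K\subset B$, equivalently $K\subset x_K^{-1}B$. Since left translations are coarse isomorphisms for the left coarse structure, each $x_K^{-1}B$ has asymptotic dimension $k:=\asdim(B)$. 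Fixing an entourage $\e$ and choosing a cover $\U_\e$ of $B$ with $\mesh(\U_\e)\subset\delta$ and $\e$-multiplicity at most $k+1$, its translate $x_K^{-1}\U_\e$ covers $x_K^{-1}B\supset K$ and restricts to a cover of $K$ with the same mesh entourage $\delta$ and multiplicity bound $k+1$. The goal is then to paste these uniform compact-covers into a single cover of all of $X$ with the same bounds, giving $\asdim(X)\le k=\asdim(B)<\asdim(X)$---a contradiction.

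The main obstacle is precisely this last pasting step. A naive gluing along a nested exhaustion $K_1\subset K_2\subset\cdots$ of $X$ multiplies the $\e$-multiplicity by the overlap depth between consecutive annuli and spoils the bound $k+1$. The natural remedy is an ultrafilter or limit construction: fix an ultrafilter $\mathfrak U$ on the directed family of compact subsets of $X$, and form the ultralimit of the pointed covers $(x_K^{-1}B,x_K^{-1}\U_\e,1_G)$ to extract a cover of $X$ inheriting the mesh entourage and multiplicity bound. Making this limit argument rigorous inside the coarse-space formalism---in effect, proving that a translation-invariant coarse space whose arbitrarily large compact subsets are uniformly $k$-dimensional must itself be $k$-dimensional---is the technical heart of the argument.
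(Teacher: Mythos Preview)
Your overall route matches the paper's: both argue the contrapositive and reduce to showing that if $X\setminus B$ is not large then $\asdim(B)=\asdim(X)$, using that every finite symmetric subset $F\ni 1_G$ of $X$ can be left-translated into $B$. Where you diverge is in the execution of this last step. You propose pasting translated covers via an ultralimit and flag this as the technical heart still to be carried out. The paper avoids any pasting by invoking its Theorem~\ref{p2.6}: to show $\asdim(X)\le n$, it suffices that for each entourage $\e$ there be a single $\delta\in\E$ such that every \emph{finite $\e$-connected} $F\subset X$ admits an $(n{+}1)$-coloring whose monochrome $\e$-chains have diameter in $\delta$. One fixes once and for all a coloring $\chi_B:B\to n{+}1$ with bound $\delta$ witnessing $\asdim_{col}(B)\le n$; for a given finite $\e$-connected $F$ (symmetrized so that $F=F^{-1}\ni 1_G$), non-largeness of $X\setminus B$ yields $z\notin(X\setminus B)F$, hence $zF\subset B$, and one sets $\chi(x)=\chi_B(zx)$. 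Left-invariance of the entourages $\e,\delta$ transports the diameter bound back to $F$.

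Your ultralimit would in fact work---it is essentially the Tychonoff-compactness extraction in $(n{+}1)^X$ already carried out in the proof of Proposition~\ref{p2.5}---so there is no genuine gap, only an unfinished step. But routing through the finite-subset criterion Theorem~\ref{p2.6} is shorter and spares you from re-proving that extraction; the single global coloring $\chi_B$ already supplies the uniform $\delta$ you need, and translation replaces any limiting procedure.
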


We recall that a topological group $G$ is {\em compactly generated} if $G$ is algebraically generated by some compact subset $K\subset G$.

\begin{theorem}\label{t1.7} For an Abelian locally compact topological group $X$ the following conditions are equivalent:
\begin{enumerate}
\item $\mathcal S(X)=\mathcal D_<(X)$;
\item $X$ is compactly generated;
\item $X$ is coarsely equivalent to the Euclidean space $\IR^n$ for some $n\in\w$.
\end{enumerate}
\end{theorem}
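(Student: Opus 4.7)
The plan is to prove the cycle $(3)\Ra(1)\Ra(2)\Ra(3)$. The implication $(3)\Ra(1)$ is immediate from Corollary~\ref{c1.5}. For $(2)\Ra(3)$ I would invoke the Pontryagin--van Kampen structure theorem: a compactly generated locally compact abelian group $X$ is topologically isomorphic to $\IR^n\times\IZ^m\times K$ for some $n,m\in\w$ and a compact abelian group $K$. Since the compact direct factor $K$ is bounded in the left coarse structure (hence coarsely trivial), and the inclusion $\IZ^m\hookrightarrow\IR^m$ is a coarse equivalence, it follows that $X$ is coarsely equivalent to $\IR^{n+m}$.

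The substantive work lies in $(1)\Ra(2)$, which I argue by contrapositive: given an LCA group $X$ that is not compactly generated, I produce a set in $\mathcal S(X)\setminus\D_<(X)$; by Theorem~\ref{t1.6a} such a set already witnesses $\mathcal S(X)\neq\D_<(X)$.

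The key tool is the following lemma, valid in any abelian topological group: \emph{every open subgroup $H$ of infinite index is small}. The proof is direct. Given a large set $L$ with $L+K=X$ for some compact symmetric $K\ni 0$, openness of $H$ guarantees that $K$ meets only finitely many cosets of $H$, so by $[X:H]=\infty$ one can pick $c\in X$ with $c\notin K+H$. Setting $K'=K+K+\{0,c,-c\}$, I verify that $(L\setminus H)+K'=X$: any $x=l+k\in L+K$ is already in $(L\setminus H)+K$ if $l\notin H$; otherwise $l\in H$ and largeness supplies $l+c=l'+k'$ with $l'\in L$ and $k'\in K$, the choice of $c$ forces $l'\notin H$ (since $l'\in H$ would give $c\in k'+H\subseteq K+H$), and $x=l'+(k+k'-c)\in(L\setminus H)+K'$.

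To deploy the lemma, I invoke the structure theorem once more to write $X\cong\IR^n\times X_0$ with $X_0$ admitting a compact open subgroup $K_0$, so $D:=X_0/K_0$ is a discrete abelian group that, because $X$ is not compactly generated, fails to be finitely generated. Using the standard identification of $\asdim(D)$ with the torsion-free rank of $D$ and the additivity $\asdim(X)=n+\asdim(D)$ (which follows from the product inequality together with the embedding of $\IR^n\times\IZ^{\mathrm{rk}(D)}$ into $X$), I choose a subgroup $F\le D$ with $\asdim(F)=\asdim(D)$ and $[D:F]=\infty$: if $D$ has infinite torsion-free rank, split a maximal $\IZ$-independent subset of $D$ into two infinite halves and let $F$ be generated by one of them; if $D$ has finite rank $r$, let $F\cong\IZ^r$ be generated by any maximal independent $r$-tuple, so $D/F$ is torsion and infinite because $D$ is not finitely generated. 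Taking the preimage of $F$ under the quotient $\pi:X_0\to D$, the open subgroup $H:=\IR^n\times\pi^{-1}(F)$ of $X$ satisfies $[X:H]=[D:F]=\infty$ and $\asdim(H)=n+\asdim(F)=\asdim(X)$, and the key lemma makes $H\in\mathcal S(X)\setminus\D_<(X)$ the desired witness. The principal obstacle is arranging the existence of $F$ with both required properties uniformly across all non-finitely-generated discrete abelian $D$, which is what forces the $\asdim$/torsion-free-rank identification and the rank case split.
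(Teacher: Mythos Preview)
Your argument is correct; for $(3)\Ra(1)$ and $(2)\Ra(3)$ it coincides with the paper's. For $(1)\Ra(2)$ the two proofs diverge. The paper argues \emph{directly}: assuming $\mathcal S(G)=\D_<(G)$, it first shows $\asdim(G)<\infty$ (otherwise the discrete quotient $G/G_0$ has infinite free rank, so $G$ contains a copy of $\oplus^\w\IZ$ which, shrunk to infinite index, is small yet has $\asdim=\infty$); then Theorem~\ref{t3.1} supplies a compactly generated subgroup $H\le G$ with $\asdim(H)=\asdim(G)$, which by hypothesis is not small, so the argument of Claim~\ref{cl1} yields a compact $K$ with $G\setminus HK$ not large, and a short translation argument forces $G=K^{-1}HK$. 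Your contrapositive instead builds an explicit witness $H\in\mathcal S(X)\setminus\D_<(X)$: the self-contained lemma that open subgroups of infinite index are small, combined with the rank dichotomy for the discrete quotient $D=X_0/K_0$, produces an open $H$ of infinite index with $\asdim(H)=\asdim(X)$. The paper's route stays closer to tools developed in the text (Theorem~\ref{t3.1} and the non-small $\Rightarrow$ complement-not-large step of Claim~\ref{cl1}) and avoids the case split on rank, while your approach is more constructive and isolates a reusable smallness lemma, at the price of importing the identification $\asdim(D)=\mathrm{rank}(D)$ and the additivity $\asdim(\IR^n\times D)=n+\asdim(D)$ from outside the paper; note, however, that the paper's finite-$\asdim$ step also leans implicitly on an extension inequality for $\asdim$, so neither argument is fully internal.
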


This theorem will be proved in Section~\ref{s:t1.7}.

\begin{remark} Theorem~\ref{t1.7} is not true for non-abelian groups. The simplest counterexample is the discrete free group $F_2$ with two generators. Any infinite cyclic subgroup $Z\subset F_2$ has infinite index in $F_2$ and hence is small, yet $\asdim(Z)=\asdim(F_2)=1$.

A less trivial example is the wreath product $A\wr \IZ$ of a non-trivial finite abelian group $A$ and $\IZ$. The group $A\wr\IZ$ has asymptotic dimension 1 (see \cite{Gen}) and the subgroup $\IZ$ is small in $A\wr \IZ$ and has $\asdim(\IZ)=1=\asdim(A\wr\IZ)$. Let us recall that the group $A\wr\IZ$ consists of ordered pairs $((a_i)_{i\in\IZ},n)\in \big(\oplus^{\IZ}A)\times\IZ$ and the group operation on $A\wr\IZ$
is defined by $$((a_i),n)\ast ((b_i),m)=((a_{i+m}+b_i), n+m).$$
The group $A\wr\IZ$ is finitely-generated and meta-abelian but is not finitely presented, see \cite{Baum}.
\end{remark}

\begin{problem} Is $\mathcal S(X)=\mathcal D_<(X)$ for each connected Lie group $X$? For each discrete polycyclic group $X$?
\end{problem}

\section{The asymptotic dimension of coarse spaces}

In this section we present various characterizations of the asymptotic dimension of coarse spaces. First we fix some notation. Let $(X,\E)$ be a coarse space, $\e\in\E$ and $A\subset X$. We shall say that $A$ has diameter less than $\e$ if $\diam(A)\subset\e$ where $\diam(A)=A\times A$.
 A sequence $x_0,\dots,x_m\in X$ is called an {\em  $\e$-chain} if  $(x_i,x_{i+1})\in\e$ for all $i<m$. In this case the finite set $C=\{x_0,\dots,x_m\}$ also will be called an {\em $\e$-chain}. A set $C\subset X$ is called $\e$-{\em connected} if any two points $x,y\in C$ can be linked by an $\e$-chain $x=x_0,\dots,x_m=y$.
The maximal $\e$-connected subset $C(x,\e)\subset X$ containing a given point $x\in X$ is called the {\em $\e$-connected component} of $x$. It consists of all points $y\in X$ that can be linked with $x$ by an $\e$-chain $x=x_0,\dots,x_m=y$.

A family $\U$ of subsets of $X$ is called {\em $\e$-disjoint} if $(U\times V)\cap\e=\emptyset$ for any distinct sets $U,V\in\U$. Each natural number $n$ is identified with the set $\{0,\dots,n-1\}$.

We shall study the interplay between the asymptotic dimension introduced in Definition~\ref{d1.1} and the following its modification:

\begin{definition}\label{d2.1} The {\em colored asymptotic dimension} $\asdim_{col}(X)$ of a coarse space $(X,\E)$ is the smallest number $n\in\w$ such that for every entourage $\varepsilon\in \E$ there is a cover $\U$ of $X$ such that $\mesh(\U)\subset\delta$ for some $\delta\in\E$ and $\U$ can be written as the union $\U=\bigcup_{i\in n+1}\U_i$ of $n+1$ many $\e$-disjoint subfamilies $\U_i$. If such a number $n\in\w$ does not exist, then we put $\asdim_{col}(X)=\infty$.
\end{definition}

Without lost of generality  we can assume that the cover $\U=\bigcup_{i\in n+1}\U_i$ in
the above definition consists of pairwise disjoint sets. In this case we can consider the coloring $\chi: X\to n+1=\{0,\dots,n\}$ such that $\chi^{-1}(i)=\bigcup\U_i$ for every $i\in n+1$. For this coloring every $\chi$-monochrome $\e$-connected subset $C\subset X$ lies in some $U\in\U$ and hence has diameter $\diam(C)\subset\diam(U)\subset\mesh(\U)\subset\delta$. A subset $A\subset X$ is {\em $\chi$-monochrome} if $\chi(A)$
is a singleton. Thus we arrive to the following useful characterization of the colored asymptotic dimension.

\begin{proposition}\label{p2.2} A coarse space $(X,\E)$ has $\asdim_{col}(X)\leq n$ for some number $n\in\w$
if and only if for any $\varepsilon \in \E$ there is a coloring $\chi:
X\to n+1$ and an entourage $\delta\in \E$ such that
 each $\chi$-monochrome $\varepsilon$-chain $C\subset X$ has $\diam(C)\subset\delta$.
\end{proposition}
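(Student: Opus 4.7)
The plan is to prove Proposition~\ref{p2.2} by two implications, both flowing directly from the definitions and the notion of $\e$-connectedness introduced at the start of the section.

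For the forward direction, I would start from a cover $\U=\bigcup_{i\in n+1}\U_i$ witnessing $\asdim_{col}(X)\le n$ for a given $\e\in\E$, with $\mesh(\U)\subset\delta$. First I would justify the remark preceding the proposition that one may assume the members of $\U$ to be pairwise disjoint: within a single $\e$-disjoint family $\U_i$ disjointness is automatic since $\e\supset\Delta_X$, and across the families we can replace each $U\in\U_i$ by $U\setminus\bigcup_{j<i}\bigcup\U_j$; this preserves $\e$-disjointness (being $\e$-disjoint is hereditary to subfamilies) and keeps the mesh inside $\delta$. Then I define $\chi:X\to n+1$ by setting $\chi(x)=i$ iff $x\in\bigcup\U_i$. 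If $x_0,\dots,x_m$ is a $\chi$-monochrome $\e$-chain of color $i$, then each $x_k$ lies in a unique $U_k\in\U_i$, and $(x_k,x_{k+1})\in\e\cap(U_k\times U_{k+1})$ forces $U_k=U_{k+1}$ by $\e$-disjointness of $\U_i$. Hence the whole chain lies in a single $U\in\U_i$, so $\diam(C)\subset\diam(U)\subset\mesh(\U)\subset\delta$, as required.

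For the converse, suppose for every $\e\in\E$ we are given a coloring $\chi:X\to n+1$ and $\delta\in\E$ with the stated property. For each $i\in n+1$ I take $\U_i$ to be the family of $\e$-connected components of the monochromatic layer $\chi^{-1}(i)$ (using $\e$-connectedness as defined at the start of the section). Then $\U=\bigcup_{i\in n+1}\U_i$ is a cover of $X$. Each $\U_i$ is $\e$-disjoint: if $(x,x')\in\e\cap(C\times C')$ for distinct components $C,C'\in\U_i$, then the two-element sequence $x,x'$ would be an $\e$-chain in $\chi^{-1}(i)$, merging $C$ and $C'$, a contradiction. Finally, to bound the mesh, pick any $x,y$ in one component $C$; they are linked by a $\chi$-monochrome $\e$-chain inside $\chi^{-1}(i)$, whose endpoints land in $\delta$ by the hypothesis, so $(x,y)\in\delta$ and hence $\diam(C)\subset\delta$. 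This gives $\mesh(\U)\subset\delta\in\E$, witnessing $\asdim_{col}(X)\le n$.

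I do not expect any serious obstacle: both directions are essentially bookkeeping. The only mildly delicate point is the disjointification in the forward direction (which the authors already announce informally), and in the backward direction the realization that the $\e$-connected components of $\chi^{-1}(i)$ are automatically the right candidates for an $\e$-disjoint refinement. No compactness, no choice of subfamilies beyond forming components, and the symmetry and diagonal axioms for $\E$ are used only implicitly through the definition of $\e$-chains.
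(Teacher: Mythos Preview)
Your proposal is correct and follows essentially the same approach as the paper: the forward direction is exactly the discussion the authors place before the proposition (disjointify, color by index, observe that a monochrome $\e$-chain cannot leave its set), and your backward direction, taking $\U_i$ to be the $\e$-connected components of $\chi^{-1}(i)$, is precisely the paper's family $\U_i=\{C_\chi(x,\e):x\in\chi^{-1}(i)\}$. The only difference is that you spell out the disjointification step in more detail than the paper does.
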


\begin{proof} The ``only if'' part follows from the above discussion. To prove the ``if'' part, for every $\e\in\E$ we need to construct a cover $\U=\bigcup_{i\in n+1}\U_i$ such that $\mesh(\U)\in\E$ and each family $\U_i$ is $\e$-disjoint. By our assumption, there is a coloring  $\chi:X\to n+1$ and an entourage $\delta\in \E$ such that each $\chi$-monochrome $\e$-chain $C\subset X$ has $\diam(C)\subset\delta$.

For each $x\in X$ let $C_\chi(x,\e)$ be the set of all points $y\in X$ that can be linked with $x$ by a $\chi$-monochome $\e$-chain $x=x_0,x_1,\dots,x_m=y$. It follows that $\diam(C_\chi(x,\e))\subset \delta$. For every $i\in n+1$ consider the $\e$-disjoint family $\U_i=\{C_\chi(x,\e): x\in \chi^{-1}(i)\}$. It is clear that  $\U=\bigcup_{i\in n+1}\U_i$ is a cover with $\mesh(\U)\subset\delta\in\E$, witnessing that $\asdim_{col}(X)\le n$.
\end{proof}

Now we are ready to prove the equivalence of two definitions of asymptotic dimension. For metrizable coarse spaces this equivalence was proved in \cite[9.3.7]{BS}.

\begin{proposition}\label{p2.3} Each coarse space $(X,\E)$ has $\asdim(X)=\asdim_{col}(X)$.
\end{proposition}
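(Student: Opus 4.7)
The plan is to prove the equivalent pair of inequalities $\asdim(X) \le \asdim_{col}(X)$ and $\asdim_{col}(X) \le \asdim(X)$, invoking Proposition~\ref{p2.2} for the harder direction.

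For the inequality $\asdim(X) \le \asdim_{col}(X)$: suppose $\asdim_{col}(X) \le n$ and fix $\e \in \E$. By axiom (B) there exists $\eta \in \E$ with $\eta \supset \e \circ \e$. Applying Definition~\ref{d2.1} to $\eta$ (rather than to $\e$), one obtains a cover $\U = \bigcup_{i\in n+1} \U_i$ with $\mesh(\U) \subset \delta$ and each $\U_i$ being $\eta$-disjoint. The key observation is that each $\e$-ball $B(x,\e)$ meets at most one set per class $\U_i$: if $u \in U \cap B(x,\e)$ and $v \in V \cap B(x,\e)$ for distinct $U, V \in \U_i$, then by symmetry of $\e$, $(u,v) \in \e \circ \e \subset \eta$, contradicting $\eta$-disjointness. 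Hence $B(x,\e)$ meets at most $n+1$ sets of $\U$, witnessing $\asdim(X) \le n$.

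For the reverse inequality $\asdim_{col}(X) \le \asdim(X)$: assume $\asdim(X) \le n$. By Proposition~\ref{p2.2}, it suffices to construct, for each $\e \in \E$, a coloring $\chi \colon X \to n+1$ and $\delta \in \E$ such that every $\chi$-monochrome $\e$-chain has diameter in $\delta$. The plan is a greedy coloring of a well-chosen cover. Choose an entourage $\eta \in \E$ so that the cover $\U$ obtained by applying $\asdim(X) \le n$ to $\eta$ has $\mesh(\U) \subset \delta$ with $\delta \circ \e \subset \eta$ (see below). Well-order $\U$ and assign to each $U \in \U$ the least color $c(U) \in \{0, \ldots, n\}$ such that $(U \times V) \cap \e = \emptyset$ for every earlier $V$ with $c(V) = c(U)$. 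To verify $c(U) \le n$, suppose otherwise: there exist earlier $V_0, \ldots, V_n$ of distinct colors $0, \ldots, n$ with $(U \times V_i) \cap \e \ne \emptyset$, giving $u_i \in U$, $v_i \in V_i$ with $(u_i, v_i) \in \e$. For any fixed $u \in U$, the bound $\diam(U) \subset \delta$ yields $(u, u_i) \in \delta$ and hence $(u, v_i) \in \delta \circ \e \subset \eta$, so $B(u, \eta)$ meets the $n+2$ distinct sets $U, V_0, \ldots, V_n$, contradicting $\eta$-multiplicity $\le n+1$. Now set $\chi(x) := c(U(x))$ where $U(x)$ is the earliest set of $\U$ containing $x$. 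For a $\chi$-monochrome $\e$-chain $x_0, \ldots, x_m$, consecutive $U(x_j), U(x_{j+1})$ are of the same color with $(x_j, x_{j+1}) \in (U(x_j) \times U(x_{j+1})) \cap \e$, forcing $U(x_j) = U(x_{j+1})$ by the defining property of $c$; hence the entire chain lies in the single set $U(x_0)$ and has diameter in $\delta$.

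The chief technical obstacle is the preliminary choice of $\eta$: ensuring $\delta \circ \e \subset \eta$ where $\delta = \mesh(\U)$ emerges from applying the $\asdim$ hypothesis to $\eta$ is a mild circularity, since $\delta$ depends on the cover, which depends on $\eta$. I expect this to be resolved by a standard two-stage application of the $\asdim$ condition---first to $\e$ to obtain an initial mesh bound $\delta_0$, and then to $\eta := \delta_0 \circ \e$, possibly iterating or enlarging the entourage as required---so that the mesh of the resulting cover is absorbed into $\eta$. Once the cover is in place, the greedy coloring and verification of the monochrome $\e$-chain condition proceed as described above, and invoking Proposition~\ref{p2.2} completes the proof of $\asdim(X) = \asdim_{col}(X)$.
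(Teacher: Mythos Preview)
Your argument for $\asdim(X)\le\asdim_{col}(X)$ is correct and essentially identical to the paper's.

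The gap is in the reverse inequality. The greedy-coloring scheme you outline is internally consistent \emph{once} you have a cover $\U$ with $\eta$-multiplicity $\le n+1$ and $\mesh(\U)\subset\delta$ satisfying $\delta\circ\e\subset\eta$: steps 4 and 6 then go through exactly as you wrote. But that preliminary choice of $\eta$ cannot be made in general, and your proposed two-stage fix does not rescue it. When you apply the $\asdim$ hypothesis to a larger entourage $\eta$, the mesh bound $\delta$ you obtain is typically \emph{larger} than $\eta$, not smaller. Concretely, in a metric space with $n$-dimensional control function $D$ (so that for each $R$ there is a cover of $R$-multiplicity $\le n+1$ and mesh $\le D(R)$), your requirement reads $D(R)+r\le R$ for some $R$, given the fixed scale $r$ of $\e$. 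Nothing prevents $D(R)\ge R$ for all $R$ (e.g.\ $D(R)=2R$), in which case no choice of $\eta$ works. Iterating as you suggest only produces a sequence $\delta_0,\delta_1,\dots$ that grows, so you never close the loop.

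The paper avoids this circularity entirely by coloring \emph{points} rather than cover elements. It applies the $\asdim$ hypothesis once, to $\e^{n+1}$, obtaining a cover $\U$ with $\e^{n+1}$-multiplicity $\le n+1$; then $\chi(x)$ is defined as the largest $k\le n$ with $|\{U\in\U:B(x,\e^{k+1})\cap U\ne\emptyset\}|=k+1$, which exists by pigeonhole since these cardinalities climb from $1$ to at most $n+1$ as the exponent runs from $1$ to $n+1$. A short argument then shows that along a monochrome $\e$-chain the family of cover elements seen at scale $\e^{k+1}$ is constant, pinning the chain to a single $U\in\U$ up to an $\e^{k+1}$-neighborhood. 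The point is that the required entourage $\e^{n+1}$ is determined \emph{before} the cover is chosen, so no circularity arises.
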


\begin{proof} To prove that $\asdim(X)\le\asdim_{col}(X)$, put $n=\asdim_{col}(X)$ and take any entourage $\e\in\E$. By Definition~\ref{d2.1}, for the entourage $\e\circ\e\in\E$ we can find a cover $\U=\bigcup_{i\in n+1}\U_i$ with $\mesh(\U)\in\E$ such that each family $\U_i$ is $\e\circ\e$-disjoint. We claim that each $\e$-ball $B(x,\e)$, $x\in X$, meets at most one set of each family $\U_i$. Assuming that $B(x,\e)$ meets two distinct sets $U,V\in\U_i$, we can find points $u\in U$ and $v\in V$ with $(x,u),(x,v)\in\e$ and conclude that $(u,v)\in\e\circ\e$, which is not possible as $\U_i$ is $\e\circ\e$-disjoint. Now we see that the ball $B(x,\e)$ meets at most $n+1$ element of the cover $\U$ and hence $\asdim(X)\le n$.
\smallskip

The proof of the inequality $\asdim_{col}(X)\le \asdim(X)$ is a bit longer. If the dimension $n=\asdim(X)$ is infinite, then there is nothing to prove. So, we assume that $n\in\w$. To prove that $\asdim_{col}(X)\le n$, fix an   entourage $\e\in\E$. Let $\e^0=\Delta_X$ and $\e^{k+1}=\e^k\circ \e$ for $k\in\w$. Since   $\macrodim(X)\leq n$, for the entourage $\e^{n+1}\in\E$ we can find a cover $\U$ of $X$ such that $\delta=\mesh(\U)\in\E$ and each $\e^{n+1}$-ball $B(x,\e^{n+1})$ meets at most $n+1$ many sets $U\in\U$. For every $i\le n+1$ and $x\in X$ consider the subfamily $\U(x,\e^i)=\{U\in\U:B(x,\e^i)\cap U\ne\emptyset\}$ of $\U$. It follows that $1\le|\U(x,\e^i)|\le|\U(x,\e^{i+1})|\le n+1$ for every $0\le i\le n$. Consequently, $|\U(x,\e^i)|=i$ for some $i\le n+1$. Let $\chi(x)$ be the maximal number $k\le n$ such that $|\U(x,\e^{k+1})|=k+1$. In such a way we have defined a coloring $\chi:X\to n+1=\{0,\dots,n\}$.

To finish the proof it suffices to show that any $\chi$-monochrome
$\varepsilon$-chain $C=\{x_0,\dots,x_m\}\subset X$ has $\diam(C)\subset\delta\circ \varepsilon^{n+1}$.
Let $k=\chi(x_0)$ be the color of the chain $C$. It follows that $|\U(x_i,\e^{k+1})|=k+1$ for all $x_i\in C$. We claim that $\U(x_i,\e^{k+1})=\U(x_{i+1},\e^{k+1})$ for all $i<m$. Assuming the converse, we would get that $|\U(x_i,\e^{k+1})\cup\U(x_{i+1},\e^{k+1})|\ge k+3$ and then the family $\U(x_i,\e^{k+2})\supset \U(x_i,\e^{k+1})\cup\U(x_{i+1},\e^{k+1})$ has cardinality $|\U(x_i,\e^{k+2})|\ge k+3$, which implies that $|\U(x_i,\e^i)|=i$ for some $k+3\le i\le n+1$. But this contradicts the definition of $k=\chi(x_i)$. Hence $\U(x_i,\e^{k+1})=\U(x_{0},\e^{k+1})$ for all $i\le m$ and then $C\subset B(U,\e^{k+1})$ for every $U\in\U(x_0,\e^{k+1})$. Now we see that $\diam (C)\subset \diam(U)\circ \e^{k+1}\subset\delta\circ\e^{k+1}$.
\end{proof}

Propositions ~\ref{p2.2} and ~\ref{p2.3} imply:

\begin{corollary}\label{c2.4} A coarse space $(X,\E)$ has asymptotic dimension $\macrodim(X)\leq n$ for some $n\in\w$
if and only if for any $\varepsilon \in \E$ there are $\delta\in\E$ and a coloring $\chi:X\to n+1$ such that
 any $\chi$-monochrome $\varepsilon$-chain $C\subset X$ has $\diam(C)\subset\delta$.
\end{corollary}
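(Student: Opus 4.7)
The plan is straightforward: the corollary is simply the concatenation of Proposition~\ref{p2.2} and Proposition~\ref{p2.3}, so no new combinatorial work is needed. Both ingredients have already been established in the excerpt, and all that remains is to chain the two equivalences together.

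More precisely, I would argue as follows. By Proposition~\ref{p2.3} we have the identity $\asdim(X) = \asdim_{col}(X)$, so the inequality $\asdim(X)\le n$ is equivalent to $\asdim_{col}(X)\le n$. Proposition~\ref{p2.2} in turn characterizes the inequality $\asdim_{col}(X)\le n$ exactly by the existence, for every $\varepsilon\in\E$, of a coloring $\chi:X\to n+1$ and an entourage $\delta\in\E$ for which each $\chi$-monochrome $\varepsilon$-chain $C\subset X$ satisfies $\diam(C)\subset\delta$. Composing these two equivalences yields the statement of the corollary in both directions at once.

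Since there is nothing genuinely new to prove here, I do not expect any obstacle; the only thing to be careful about is to quote both propositions rather than only one, since Proposition~\ref{p2.2} on its own is phrased in terms of $\asdim_{col}$ and one must invoke Proposition~\ref{p2.3} to translate this back into a statement about $\asdim$. Accordingly the written proof can be limited to a single sentence of the form ``This is an immediate consequence of Propositions~\ref{p2.2} and~\ref{p2.3}.''
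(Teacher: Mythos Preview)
Your proposal is correct and matches the paper's own treatment exactly: the paper derives Corollary~\ref{c2.4} directly from Propositions~\ref{p2.2} and~\ref{p2.3} without any additional argument. Your one-sentence justification is precisely what is intended.
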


This corollary can be generalized as follows (cf. \cite{CZ}).

\begin{proposition}\label{p2.5} A coarse space $(X,\E)$ has $\asdim(X)\le n$ for some $n\in\w$ if and only if for any entourage
$\varepsilon\in\E$ there is an entourage $\delta\in\E$ such that for any
finite set $F\subset X$ there is a coloring $\chi:F\to n+1$ such that  each $\chi$-monochrome $\varepsilon$-chain $C\subset F$ has $\diam(C)\subset\delta$.
\end{proposition}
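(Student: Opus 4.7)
The \emph{only if} direction is immediate from Corollary~\ref{c2.4}: given $\e\in\E$, choose $\delta\in\E$ and a coloring $\chi:X\to n+1$ as provided there; for every finite $F\subset X$ the restriction $\chi|_F$ witnesses the required property on $F$.

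The substantive direction is the converse, which I plan to establish by a standard compactness argument in the space $(n+1)^X$ of all colorings, equipped with the product topology of discrete finite factors. By Tychonoff's theorem, $(n+1)^X$ is compact. Fix $\e\in\E$ and let $\delta\in\E$ be the entourage supplied by the hypothesis. For each finite subset $E\subset X$ define
$$K_E=\{\chi\in(n+1)^X:\text{every $\chi$-monochrome $\e$-chain }C\subset E\text{ satisfies }\diam(C)\subset\delta\}.$$
Each $K_E$ is clopen in $(n+1)^X$ because its defining property depends only on the values of $\chi$ on the finite set $E$.

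I claim the family $\{K_E\}$ has the finite intersection property. Given finite subsets $E_1,\dots,E_k\subset X$, their union $F=E_1\cup\cdots\cup E_k$ is finite, so the hypothesis furnishes a coloring $\chi_F:F\to n+1$ for which every $\chi_F$-monochrome $\e$-chain $C\subset F$ has $\diam(C)\subset\delta$. Any extension $\chi:X\to n+1$ of $\chi_F$ then lies in $K_{E_i}$ for each $i$, since any $\e$-chain contained in $E_i$ is also contained in $F$. By compactness $\bigcap_E K_E\neq\emptyset$; fix any $\chi$ in this intersection. Every $\e$-chain in $X$ is finite and hence contained in some finite $E\subset X$, so every $\chi$-monochrome $\e$-chain $C\subset X$ satisfies $\diam(C)\subset\delta$. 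Corollary~\ref{c2.4} then yields $\asdim(X)\le n$.

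The one ingredient that is not purely bookkeeping is Tychonoff's theorem for the compact space $(n+1)^X$; I anticipate no serious obstacle beyond the mild care needed in verifying the finite intersection property. The argument is of the same flavor as standard compactness extensions of finite colorings (e.g.\ the De~Bruijn--Erd\H{o}s theorem for graph colorings), adapted to the coarse setting.
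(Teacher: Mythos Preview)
Your proof is correct and follows essentially the same approach as the paper: both invoke Tychonoff compactness of the coloring space $(n+1)^X$ to pass from finite colorings to a global one, and both then appeal to Corollary~\ref{c2.4}. The only cosmetic difference is that the paper phrases the compactness step via a cluster point of the net $\{\tilde\chi_F\}_{F\in\F}$ of extended colorings, whereas you use the finite intersection property of the closed sets $K_E$; these are interchangeable formulations of the same argument.
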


\begin{proof} This proposition will follow from Corollary~\ref{c2.4} as soon as for any $\e\in\E$ we find $\delta\in\E$ and a coloring $\chi:X\to n+1$ such that each $\chi$-monchome $\e$-chain in $X$ has diameter less that $\delta$.

By our assumption, there is an entourage $\delta\in\E$ such that for every finite subset $F\subset X$ there is a coloring $\chi_F:F\to n+1$ such that each $\chi_F$-monochrome $\e$-chain in $F$ has diameter less that $\delta$.
Extend $\chi_F$ to a coloring $\tilde\chi_F:X\to n+1$.

Let $\F$ denote the family of all finite subsets of $X$, partially ordered by the inclusion relation $\subset$. The colorings $\tilde\chi_F$, $F\in\F$, can be considered as elements of the compact Hausdorff space $K=\{0,\dots,n\}^X$ endowed with the Tychonov product topology. The compactness of $K$ implies that the net $\{\tilde\chi_F\}_{F\in\F}$ has a cluster point $\chi\in K$, see \cite[3.1.23]{En}. The latter means that for each finite set $F_0\in\F$ and a neighborhood $O(\chi)\subset K$ there is a finite set $F\in\F$ such that $F\supset F_0$ and $\tilde\chi_F\in O(\chi)$.

We claim that the coloring $\chi:X\to n+1$ has the required property: each $\chi$-monochome $\e$-chain $C\subset X$ has $\diam(X)\subset\delta$. Observe that the finite set $C$ determines a neighborhood $O_C(\chi)=\{f\in K:f|C=\chi|C\}$, which contains a coloring $\tilde \chi_F$ for some finite set $F\supset C$. The choice of the coloring $\chi_F=\tilde\chi_F|F$ guarantees that the set $C\subset F$ has  $\diam(C)\subset\delta$.
\end{proof}

Proposition~\ref{p2.5} admits the following self-generalization.

\begin{theorem}\label{p2.6} A coarse space $(X,\E)$ has $\asdim(X)\le n$ for some $n\in\w$ if and only if for any entourage
$\varepsilon\in\E$ there is an entourage $\delta\in\E$ such that for any finite $\e$-connected subset $F\subset X$ there is a coloring $\chi:F\to  n+1$ such that  each $\chi$-monochrome $\varepsilon$-chain $C\subset F$ has $\diam(C)\subset\delta$.
\end{theorem}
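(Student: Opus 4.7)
The plan is to reduce Theorem~\ref{p2.6} to Proposition~\ref{p2.5}, whose hypothesis asks for the coloring property on all finite subsets rather than merely the finite $\e$-connected ones. The ``only if'' direction is immediate: assuming $\asdim(X)\le n$, Proposition~\ref{p2.5} yields an entourage $\delta\in\E$ that works for every finite $F\subset X$, and in particular for every finite $\e$-connected $F\subset X$.

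For the ``if'' direction, I would fix $\e\in\E$ and let $\delta\in\E$ be the entourage supplied by the hypothesis of Theorem~\ref{p2.6}. Given an arbitrary finite subset $F\subset X$, I would decompose it into its $\e$-connected components inside $F$, i.e.\ write $F=F_1\sqcup\cdots\sqcup F_k$ where each $F_j$ consists of all points of $F$ linkable to a chosen base point by an $\e$-chain whose intermediate terms also lie in $F$. Each $F_j$ is then itself a finite $\e$-connected subset of $X$, because a connecting $\e$-chain in $F$ between two points of $F_j$ cannot leave $F_j$ (every intermediate point is $\e$-linked to the base point through a truncated subchain). So the hypothesis provides colorings $\chi_j\colon F_j\to n+1$ under which every $\chi_j$-monochrome $\e$-chain in $F_j$ has diameter contained in $\delta$, and I would patch these into a single coloring $\chi\colon F\to n+1$ by setting $\chi|F_j=\chi_j$.

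The key observation closing the argument is that any $\e$-chain $C\subset F$ is itself $\e$-connected in $F$, hence entirely contained in a single component $F_j$; thus every $\chi$-monochrome $\e$-chain in $F$ is automatically a $\chi_j$-monochrome $\e$-chain in the corresponding $F_j$, and therefore has diameter in $\delta$. This is exactly the hypothesis of Proposition~\ref{p2.5}, which then yields $\asdim(X)\le n$. I do not expect any genuine obstacle: the only subtlety is checking that $\e$-components of a finite set are $\e$-connected as subspaces of $X$ and that a single $\e$-chain cannot jump between different components, both of which are elementary consequences of the definition of $\e$-connectedness.
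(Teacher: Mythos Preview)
Your proposal is correct and matches what the paper intends: the paper does not spell out a proof of Theorem~\ref{p2.6} at all, merely introducing it as a ``self-generalization'' of Proposition~\ref{p2.5}. Your argument---decomposing an arbitrary finite $F$ into its $\e$-connected components within $F$, coloring each component via the hypothesis, and noting that every $\e$-chain in $F$ lies in a single component---is exactly the routine reduction the paper is leaving to the reader.
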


Finally, let us prove Addition Theorem for the asymptotic dimension.
For metrizable spaces this theorem is well known; see \cite[9.13]{Roe} or \cite[9.7.1]{BS}.

\begin{theorem}\label{t2.7} For any subspaces $A,B$ of a coarse space $(X,\E)$ we get $$\asdim(A\cup B)\le\max\{\asdim(A),\asdim(B)\}.$$
\end{theorem}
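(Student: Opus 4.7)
The plan is to invoke the colored/chain characterization in Corollary~\ref{c2.4}: setting $n=\max\{\asdim A,\asdim B\}$ (the case $n=\infty$ being vacuous), it suffices, for every $\e\in\E$, to produce a coloring $\chi\colon A\cup B\to n+1$ and an entourage $\delta\in\E$ such that every $\chi$-monochrome $\e$-chain in $A\cup B$ has diameter contained in $\delta$.

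Fix $\e\in\E$. First I would apply $\asdim A\le n$ via Corollary~\ref{c2.4} at scale $\e$, obtaining a coloring $\chi_A\colon A\to n+1$ and an entourage $\delta_A\in\E$ such that every $\chi_A$-monochrome $\e$-chain in $A$ has diameter $\subset\delta_A$. Next I would form the enlarged entourage $\e_B=\e\circ\delta_A\circ\e\in\E$ and apply $\asdim B\le n$ at scale $\e_B$ to obtain $\chi_B\colon B\to n+1$ and $\delta_B\in\E$ such that every $\chi_B$-monochrome $\e_B$-chain in $B$ has diameter $\subset\delta_B$. The combined coloring $\chi\colon A\cup B\to n+1$ is then defined by $\chi|_{A\setminus B}=\chi_A|_{A\setminus B}$ and $\chi|_{B}=\chi_B$.

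To verify the required property, I would analyse an arbitrary $\chi$-monochrome $\e$-chain $C=(x_0,\dots,x_m)$ by splitting it into maximal ``$A$-runs'' (consecutive indices with $x_i\in A\setminus B$) and ``$B$-runs'' (consecutive indices with $x_i\in B$). Each $A$-run is a $\chi_A$-monochrome $\e$-chain in $A$, hence of diameter $\subset\delta_A$; each $B$-run is a $\chi_B$-monochrome $\e$-chain (\emph{a fortiori} an $\e_B$-chain) in $B$, hence of diameter $\subset\delta_B$. The bridging observation is that whenever a $B$-run ends at $x_p$ and the next begins at $x_q$, the intermediate $A$-run has diameter $\subset\delta_A$ and is $\e$-attached to both $x_p$ and $x_q$, so $(x_p,x_q)\in\e\circ\delta_A\circ\e=\e_B$. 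Inserting these single $\e_B$-jumps between consecutive $B$-runs yields a $\chi_B$-monochrome $\e_B$-chain in $B$ that contains every $B$-point of $C$, so all $B$-points of $C$ lie in a common set of diameter $\subset\delta_B$. Each $A$-point of $C$ is within $\delta_A\circ\e$ of some $B$-point, unless $C$ lies entirely in $A\setminus B$, in which case $\diam C\subset\delta_A$. Consequently $\diam C$ is contained in the entourage $\delta:=\delta_A\circ\e\circ\delta_B\circ\e\circ\delta_A\in\E$, and Corollary~\ref{c2.4} gives $\asdim(A\cup B)\le n$.

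The main obstacle I anticipate is that a $\chi$-monochrome $\e$-chain may oscillate arbitrarily many times between $A$ and $B$, so a naive combination of two same-scale colorings leaves the number of transitions, and thus the diameter bound, uncontrolled. The device that bypasses this is the deliberate enlargement of the scale for $B$ to $\e_B=\e\circ\delta_A\circ\e$: calibrated so that a complete detour through any single $A$-run collapses into one $\e_B$-step in $B$, this converts the oscillating chain into a genuine $\chi_B$-monochrome $\e_B$-chain inside $B$. A minor technical point is that entourages of the subspace $A$ (or $B$) should be viewed as entourages of $X$ via $A^2\hookrightarrow X^2$, which is routine from axioms (A)--(C).
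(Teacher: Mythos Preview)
Your proposal is correct and follows essentially the same route as the paper: apply Corollary~\ref{c2.4} to $A$ at scale $\e$, then to $B$ at the enlarged scale $\e_B=\e\circ\delta_A\circ\e$, glue the two colorings, and bound every monochrome $\e$-chain by $\delta=\delta_A\circ\e\circ\delta_B\circ\e\circ\delta_A$ via the observation that each $A$-detour collapses to a single $\e_B$-step between its neighbouring $B$-points. The only cosmetic difference is that the paper first replaces $A,B$ by disjoint sets, whereas you give $B$ priority on the overlap; these are equivalent.
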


\begin{proof} Only the case of finite $n=\max\{\asdim(A),\asdim(B)\}$ requires the proof. Without loss of generality the sets $A$ and $B$ are disjoint. To show that $\asdim(A\cup B)\le n$ we shall apply
Corollary~\ref{c2.4}. Fix any entourage $\e\in\E$.
Since $\asdim(A)\le n$ there are an  entourage $\delta_A\in\E$ and a coloring $\chi_A:A\to n+1$ such that each $\chi$-monochrome $\e$-chain in $A$ has diamater less that $\delta_A$. Since $\asdim(B)\le n$, for the entourage $\e_B=\e\circ\delta_A\circ\e$ there are an entourage $\delta_B\in\E$ and a coloring $\chi_B:A\to\{0,\dots,n\}$ such that each $\chi$-monochrome $\e_B$-chain in $B$ has diameter less that $\delta_B$.

The union of the colorings $\chi_A$ and $\chi_B$ yields the coloring $\chi:A\cup B\to\{0,\dots,n\}$ such that $\chi|A=\chi_A$ and $\chi|B=\chi_B$. We claim that each $\chi$-monochrome $\e$-chain $C=\{x_0,\dots,x_m\}\subset A\cup B$ has $\diam(C)\subset\delta$ where $\delta=\delta_A\circ\e\circ\delta_B\circ\e\circ\delta_A$. Without loss of generality, the points $x_0,\dots,x_m$ of the chain $C$ are pairwise distinct.

If $C\subset A$, then $C$, being a $\chi_A$-monochrome $\e$-chain in $A$ has $\diam(C)\subset\delta_A\subset\delta$ and we are done. So, we assume that $C\not\subset A$. In this case $b=|C\cap B|\ge 1$ and we can choose a strictly increasing sequence $0\le k_1<k_2<\cdots<k_b\le m$ such that $\{x_{k_1},\dots,x_{k_b}\}=C\cap B$. Then $\{x_0,\dots,x_{k_1-1}\}$, being a $\chi_A$-monochrome $\e$-chain in $A$, has diameter less that $\delta_A$. Consequently, the $\e$-chain $\{x_0,\dots,x_{k_1}\}$ has diameter less that $\delta_A\circ\e\subset\e_B$.
By the same reason the $\e$-chain $\{x_{k_b},\dots,x_m\}$ has diameter less that $\e\circ\delta_A\subset\e_B$ and for every $1\le i<b$ the $\e$-chain $\{x_{k_i},\dots,x_{k_{i+1}}\}\subset \{x_{k_i}\}\cup A\cup\{x_{k_{i+1}}\}$ has diameter less than $\e\circ\delta_A\circ\e=\e_B$. Then $\{x_{k_1},\dots,x_{k_b}\}$, being a $\chi_B$-monochrome $\e_B$-chain in $B$, has diameter less that $\delta_B$. Now we see that the $\e$-chain $C=\{x_0,\dots,x_m\}$ has $\diam(C)\subset \delta_A\circ\e\circ\delta_B\circ\e\circ\delta_A=\delta$.
\end{proof}

The characterization Theorem~\ref{p2.6} will be applied to prove the following theorem which was known \cite[2.1]{DS} in the context of countable groups.

\begin{theorem}\label{t3.1} If $G$ is a topological group endowed with its left coarse structure, then
$$\asdim(G)=\sup\,\{\asdim(H):\mbox{$H$ is a compactly generated subgroup of $G$}\}.$$
\end{theorem}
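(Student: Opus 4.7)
The plan is to establish both inequalities, with the nontrivial one reduced to the finite-set characterization of asymptotic dimension from Theorem~\ref{p2.6}.

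The direction $\sup\le\asdim(G)$ is routine monotonicity. For any compactly generated subgroup $H\le G$, every basic entourage of $H$'s own left coarse structure has the form $\{(x,y)\in H^2:y^{-1}x\in K\}$ for some compact symmetric $K\subset H$ with $1_G\in K$; and since $K$ is compact in $G$ as well, this entourage coincides with $\e\cap H^2$ for the corresponding basic entourage $\e$ of $G$. Hence $H$'s own coarse structure is contained in the subspace coarse structure inherited from $G$, and the standard monotonicity of $\asdim$ under passage to subspaces (and under coarsening) yields $\asdim(H)\le\asdim(G)$.

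For the nontrivial inequality, set $n:=\sup\{\asdim(H)\}$ and assume $n<\infty$. The idea is to verify the criterion of Theorem~\ref{p2.6} for $\asdim(G)\le n$. Fix a basic entourage $\e_K=\{(x,y)\in G^2:y^{-1}x\in K\}$ determined by a compact symmetric $K\ni 1_G$. The key structural observation is that the subgroup $H:=\la K\ra$ is compactly generated, so $\asdim(H)\le n$, and every $\e_K$-connected subset of $G$ lies in a single left coset of $H$, since $(x,y)\in\e_K$ forces $y^{-1}x\in K\subset H$. Apply Theorem~\ref{p2.6} to $H$ with the entourage $\e_K\cap H^2$ to obtain $\delta_H=\{(x,y)\in H^2:y^{-1}x\in M\}$ (for some compact symmetric $M\subset H$) such that every finite $(\e_K\cap H^2)$-connected $F'\subset H$ carries a coloring $\chi:F'\to n+1$ whose monochrome chains have diameter in $\delta_H$. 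Let $\delta\in\E$ be the corresponding left-invariant entourage of $G$ determined by the same $M$; then $\delta\cap H^2=\delta_H$.

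Given an arbitrary finite $\e_K$-connected $F\subset G$, pick any $g\in F$, so $F\subset gH$ and $g^{-1}F\subset H$ is still finite and $\e_K$-connected. Transport the coloring by setting $\tilde\chi(x):=\chi(g^{-1}x)$ on $F$, where $\chi$ is the coloring supplied by Theorem~\ref{p2.6} on $g^{-1}F$. For any $\tilde\chi$-monochrome $\e_K$-chain $C\subset F$, left-invariance of $\e_K$ turns $g^{-1}C$ into a $\chi$-monochrome $\e_K$-chain in $g^{-1}F$, giving $\diam(g^{-1}C)\subset\delta_H$; left-invariance of $\delta$ then yields $\diam(C)\subset\delta$. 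Thus Theorem~\ref{p2.6} applies and $\asdim(G)\le n$. The main obstacle—more a conceptual point than a technical one—is the coset reduction itself: one has to notice that $\e_K$-connectivity already confines chains to a single left coset of $\la K\ra$, so that the coloring problem on $G$ reduces, via left translation, to the coloring problem on the single compactly generated subgroup $H$ where the hypothesis is available. Everything else is formal, exploiting only left-invariance of the coarse structure and the finite-set characterization of $\asdim$ from Theorem~\ref{p2.6}.
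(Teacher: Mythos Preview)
Your proof is correct and follows essentially the same approach as the paper: both arguments reduce via Theorem~\ref{p2.6} to the compactly generated subgroup $H=\langle K\rangle$ determined by the given entourage, observe that $\e$-connected finite sets lie in a single left coset of $H$, and transport colorings by left translation. The only cosmetic difference is that the paper invokes Proposition~\ref{p2.2} to obtain one global coloring $\chi_H:H\to n+1$ and then restricts its translate to each finite $F$, whereas you invoke Theorem~\ref{p2.6} on $H$ to obtain a separate coloring for each translated finite set $g^{-1}F$; either route works and the rest of the argument is identical.
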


\begin{proof} Let $n=\sup\,\{\asdim(H):\mbox{$H$ is a compactly generated subgroup of $G$}\}.$
It is clear that $n\le\asdim(G)$. The reverse inequality $\asdim(G)\le n$ is trivial if $n=\infty$. So, we assume that $n<\infty$. To prove that $\asdim(G)\le n$, we shall apply Theorem~\ref{p2.6}. Let $\E$ be the left coarse structure of the topological group $G$. Given any  entourage $\e\in\E$, we
should find an entourage $\delta\in\E$ such that for each finite $\e$-connected subset $F\subset G$ there is a coloring $\chi:F\to n+1$ such that each $\chi$-monochrome $\e$-chain $C\subset F$ has $\diam(C)\subset\delta$.

By the definition of the coarse structure $\E$, for the entourage $\e\in\E$ there is a compact subset $K_\e=K_\e^{-1}\subset G$ such that $\e\subset\{(x,y)\in G^2:x\in yK_\e\}$. Let $H$ be the subgroup of $G$ generated by the compact set $K_\e$, $\E_H$ be the left coarse structure of $H$, and $\e_H=\{(x,y)\in H^2:x\in yK_\e\}\in\E_H$. Since $\asdim_{col}(H)=\asdim(H)\le n$, by Proposition~\ref{p2.2}, there is a coloring $\chi_H:H\to n+1$ and an entourage $\delta_H\in\E_H$ such that each $\chi_H$-monochrome $\e$-chain $C\subset H$ has diameter $\diam(C)\subset\delta_H$. By the definition of the coarse structure $\E_H$, there is a compact subset $K_\delta=K_\delta^{-1}\ni 1_H$ of $H$ such that $\{(x,y)\in H\times H:x\in yK_\delta\}$.

We claim that the entourage $\delta=\{(x,y)\in G\times G:x\in yK_\delta\}$ satisfies our requirements. Let $F$ be a finite $\e$-connected subset of $G$. Then for each point $x_0\in F$ we get $F\in x_0 H$ and hence $x_0^{-1}F\subset H$. So, we can define a coloring $\chi:F\to n+1$ letting $\chi(x)=\chi_H(x_0^{-1}x)$ for $x\in F$. If $C\subset F$ is a $\chi$-monochrome $\e$-chain, then $x_0^{-1}C$ is a $\chi_H$-monochrome $\e_H$-chain in $H$ and hence $\diam(x_0^{-1}C)\subset\delta_H$.
The latter means that for any points $c,c'\in C$ we get $(x_0^{-1}c,x_0^{-1}c')\in\delta_H\subset \{(x,y)\in H\times H:x\in yK_\delta\}$ and hence $x_0^{-1}c\in x_0c'K_\delta$ and $c\in c'K_\delta$, which means that $(c,c')\in\delta$ and hence $\diam(C)\subset \delta$.
\end{proof}

\section{Proof of Proposition~\ref{coarse}}\label{s1.4}

Let $f:X\to Y$ be a coarse equivalence between two coarse spaces $(X,\E_X)$ and $(Y,\E_Y)$.
Then there is a coarse map $g:Y\to X$ such that
$\{(x,g\circ f(x)):x\in X\}\subset \eta_X$ and $\{(y,f\circ g(y)):y\in Y\}\subset\eta_Y$ for some entourages $\eta_X\in\E_X$ and $\eta_Y\in\E_Y$. It follows that $B(f(X),\eta_X)=Y$ and $B(g(Y),\eta_X)=X$.
\smallskip

1. First we prove that $\asdim(X)=\asdim(Y)$. Actually, this fact is known \cite[p.129]{Roe} and we present a proof for the convenience of the reader. By the symmetry, it suffices to show that $\asdim(X)\le\asdim(Y)$. This inequality is trivial if $n=\asdim(Y)$ is infinite. So, assume that $n<\infty$. By Proposition~\ref{p2.2} and \ref{p2.3}, the inequality $\asdim(X)\le n$ will be proved  as soon as for each $\e_X\in \E_X$ we find $\delta_X\in\E_X$ and a coloring $\chi_X:X\to n+1$ such that each $\chi_X$-monochrome $\e_X$-chain $C\subset X$ has diameter $\diam(C)\subset\delta_X$.

Since the map $f:X\to Y$ is coarse, for the entourage $\e_X$ there is an entourage $\e_Y$ such that $\{(f(x),f(x')):(x,x')\in\e_X\}\subset\e_Y$. Since $\asdim(Y)=n$, for the entourage $\e_Y$ there is an entourage $\delta_Y\in\E_Y$ and a coloring $\chi_Y:Y\to n+1$ such that each $\chi_Y$-monochrome $\e_Y$-chain $C_Y\subset Y$ has diameter $\diam(C_Y)\subset\delta_Y$.

Since the function $g:Y\to X$ is coarse, for the entourage $\delta_Y$ there is an entourage $\delta_X'$ such that $\{(g(y),g(y')):(y,y')\in\delta_Y\}\subset\delta_Y'$. Put $\delta_X=\eta_X\circ\delta_X'\circ\eta_Y$ and consider the coloring $\chi_X=\chi_Y\circ f:X\to n+1$ of $X$. We claim that each $\chi_X$-monochrome $\e_X$-chain $C_X\subset X$ has diameter $\diam(C_X)\subset\delta_X$. Then choice of $\e_Y$ guarantees that the set $C_Y=f(C_X)$ is an $\e_Y$-chain. Being $\chi_X$-monochrome, it has diameter $\diam(C_Y)\subset\delta_Y$. Then the set $C_X'=g(C_Y)$ has diameter $\diam(C_X')\subset\delta_X'$. Now take any two points $c,c'\in C_X$ and observe that the pairs $(c,g\circ f(c))$ and $(c',g\circ f(c'))$ belong to the entourage $\eta_X$. Consequently, $$(c,c')\in\{(c,g\circ f(c))\}\circ\{(g\circ f(c),g\circ f(c')\}\circ\{(g\circ f(c'),c')\}\subset \eta_X\circ\delta'_X\circ\eta_X=\delta_X$$ which means that the $\e_X$-chain $C_X$ has diameter $\diam(C_X)\subset\delta_X$. So, $\asdim(X)\le n$.
\smallskip

2. The second statement of Proposition~\ref{coarse}, follows Claims~\ref{cl7a} and \ref{cl7c} proved below.

\begin{claim}\label{cl7a} A subset $A\subset X$ and its image $f(A)\subset Y$ have the same asymptotic dimension $\asdim(A)=\asdim(f(A))$.
\end{claim}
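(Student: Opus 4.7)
The plan is to reduce Claim~\ref{cl7a} to part 1 of Proposition~\ref{coarse} (just established) by showing that the restriction $f|_A : A \to f(A)$, viewed as a map between the subspaces $(A, \E_X|_A)$ and $(f(A), \E_Y|_{f(A)})$, is itself a coarse equivalence. Once this is done, applying part 1 to these two coarse spaces delivers $\asdim(A) = \asdim(f(A))$ immediately. The map $f|_A$ is coarse by restriction, so the real content is constructing a coarse quasi-inverse.

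To build such a quasi-inverse $h : f(A) \to A$, I would use the axiom of choice to select, for each $y \in f(A)$, a point $h(y) \in A$ with $f(h(y)) = y$. Then $f \circ h = \mathrm{id}_{f(A)}$, so the closeness condition on the $Y$-side, $\{(y, f(h(y))) : y \in f(A)\} \subset \Delta_{f(A)}$, is trivial. For the closeness condition on the $X$-side, observe that $a$ and $h(f(a))$ share the common image $f(a)$, so both $(a, g(f(a)))$ and $(h(f(a)), g(f(a)))$ lie in $\eta_X$; composing gives $(a, h(f(a))) \in \eta_X \circ \eta_X$, and since both endpoints lie in $A$, intersecting with $A^2$ yields an entourage of the subspace $A$. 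To check $h$ is coarse, take a subspace entourage of $f(A)$ of the form $\e_Y \cap f(A)^2$ and apply the coarseness of $g$ to find $\delta_X' \in \E_X$ with $(g(y), g(y')) \in \delta_X'$ whenever $(y, y') \in \e_Y$. Combined with $(h(y), g(y)), (h(y'), g(y')) \in \eta_X$, this yields $(h(y), h(y')) \in \eta_X \circ \delta_X' \circ \eta_X$, whose restriction to $A^2$ is the required entourage of $A$.

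The only real subtlety is bookkeeping: verifying that each constructed entourage is indeed an entourage of the relevant subspace, not merely of the ambient $X$ or $Y$. This is automatic throughout because the pairs produced by the compositions have both endpoints in $A$ (respectively in $f(A)$), so intersecting with $A \times A$ (respectively $f(A) \times f(A)$) preserves all the closeness bounds and gives a valid induced entourage. I do not expect any conceptual obstacle beyond this bookkeeping, and with the quasi-inverse $h$ in hand, Claim~\ref{cl7a} reduces cleanly to the already-proved part 1 of Proposition~\ref{coarse}.
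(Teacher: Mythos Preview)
Your proposal is correct and follows essentially the same approach as the paper: reduce to Proposition~\ref{coarse}(1) by observing that $A$ and $f(A)$ are coarsely equivalent. The paper's proof is a single sentence asserting this coarse equivalence without further justification, whereas you spell out the construction of the quasi-inverse $h$ and verify the closeness and coarseness conditions; so your argument is a detailed version of the same idea.
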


\begin{proof}
 This claim follows from Proposition~\ref{coarse}(1) proved above, since $A$ and $f(A)$ are coarsely equivalent.
\end{proof}

\begin{claim}\label{cl7b} A subset $A\subset X$ is large in $X$ if and only if its image $f(A)$ is large in $Y$.
\end{claim}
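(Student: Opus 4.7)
The plan is to prove both implications using the coarse inverse $g:Y\to X$ and the defining entourages $\eta_X,\eta_Y$ of the coarse equivalence, combined with the fact that coarse maps send entourage-related pairs to entourage-related pairs.

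For the forward direction, I would start from an entourage $\e_X\in\E_X$ with $B(A,\e_X)=X$ and aim to produce some $\e_Y\in\E_Y$ with $B(f(A),\e_Y)=Y$. Since $f$ is coarse, there is $\e_Y'\in\E_Y$ such that $(f(x),f(x'))\in\e_Y'$ whenever $(x,x')\in\e_X$. Now fix an arbitrary $y\in Y$. Applying largeness of $A$ to the point $g(y)\in X$ yields $a\in A$ with $(a,g(y))\in\e_X$, so $(f(a),f(g(y)))\in\e_Y'$. Combining with $(f(g(y)),y)\in\eta_Y$ (using symmetry of $\eta_Y$) and composing, one obtains $(f(a),y)\in \e_Y'\circ\eta_Y$. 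Choosing $\e_Y\in\E_Y$ to contain the symmetrization of $\e_Y'\circ\eta_Y$, which exists by axioms (B) and (C), we get $y\in B(f(a),\e_Y)\subset B(f(A),\e_Y)$, so $f(A)$ is large.

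The reverse direction is entirely symmetric, using that $g$ is coarse. Given $\e_Y\in\E_Y$ with $B(f(A),\e_Y)=Y$, pick $\e_X'\in\E_X$ with $(g(y),g(y'))\in\e_X'$ whenever $(y,y')\in\e_Y$. For $x\in X$ apply largeness of $f(A)$ to $f(x)$ to find $a\in A$ with $(f(a),f(x))\in\e_Y$, whence $(g(f(a)),g(f(x)))\in\e_X'$. Combining with $(a,g(f(a)))\in\eta_X$ and $(g(f(x)),x)\in\eta_X$ gives $(a,x)\in\eta_X\circ\e_X'\circ\eta_X$, and taking $\e_X\in\E_X$ to contain this composition (symmetrized) shows $x\in B(A,\e_X)$.

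No real obstacle is expected here: the argument is a routine three-fold composition of entourages, and the only technical point is invoking axiom (C) to absorb a composed relation into an actual entourage of $\E_X$ or $\E_Y$. I would probably write the two directions in parallel to emphasize the symmetric role played by $f,g$ and $\eta_X,\eta_Y$, and this claim then feeds directly into Claim~\ref{cl7c} (which compares the ideals $\mathcal S(X)$ and $\mathcal S(Y)$) via the observation that $L\subset X$ is large iff $f(L)$ is large, so smallness in $X$ transfers to smallness in $Y$ by taking images and preimages of large sets.
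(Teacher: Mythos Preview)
Your proof is correct and follows essentially the same approach as the paper: both directions hinge on composing the coarseness of $f$ (or $g$) with the entourages $\eta_X,\eta_Y$ witnessing that $g\circ f$ and $f\circ g$ are close to the identities. The paper's reverse direction is slightly slicker---it simply applies the already-proved forward direction to the coarse map $g$ to get that $g(f(A))$ is large in $X$, and then notes $g\circ f(A)\subset B(A,\eta_X)$---but this is the same idea packaged more economically; also, your closing remark misidentifies the downstream claim (it is Claim~\ref{cl7d}, not Claim~\ref{cl7c}, that transfers smallness along $f$).
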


\begin{proof}
If $A$ is large in $X$, then $B(A,\e_X)=X$ for some $\e_X\in\E_X$. Since $f$ is coarse, there exists $\e_Y\in\E_Y$ such that for each $(x_0,x_1)\in\e_X$ we get $(f(x_0),f(x_1))\in \e_Y$.
It follows that $B(f(A),\e_Y)\supset f(Y)$ and $B(f(A),\e_Y\circ\eta_Y)=B(B(f(A),\e_Y),\eta_Y)\supset B(f(X),\eta_Y)=Y$, which means that $f(A)$ is large.

Now assume conversely that the set $f(A)$ is large in $Y$. Then $g\circ f(A)$ is large in $X$. Since
$g\circ f(A)\subset B(A,\eta_X)$, we conclude that $A$ in large in $X$.
So, $A$ is large in $X$ if and only if $f(A)$ is large in $Y$.
\end{proof}

\begin{claim}\label{cl7c} A subset $A\subset X$ is small if and only if for each entourage $\e_X\in\E_X$ the set $B(A,\e_x)$ is small.
\end{claim}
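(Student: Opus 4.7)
The backward direction is immediate: by axiom (A) the diagonal $\Delta_X$ belongs to $\E_X$, and $B(A,\Delta_X)=A$, so the hypothesis applied at $\e_X=\Delta_X$ states exactly that $A$ is small.

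For the forward direction, fix an entourage $\e_X\in\E_X$ and a large subset $L\subset X$; the task is to show that $L\setminus B(A,\e_X)$ is large. My plan is to exploit the smallness of $A$ against the auxiliary set
$$L^*=(L\setminus B(A,\e_X))\cup A,$$
obtained from $L$ by replacing its portion inside $B(A,\e_X)$ by $A$ itself. First I would verify that $L^*$ is large: from $A\subset L^*$ one gets $B(A,\e_X)\subset B(L^*,\e_X)$, which combined with $L\setminus B(A,\e_X)\subset L^*$ gives $L\subset B(L^*,\e_X)$. Composing $\e_X$ with any entourage $\e_0$ witnessing the largeness of $L$ (i.e.\ $B(L,\e_0)=X$) and using axioms (B) and (C) to locate an entourage of $\E_X$ containing this composition, we obtain a witness to the largeness of $L^*$. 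The smallness of $A$ then produces an entourage $\delta\in\E_X$ with $B(L^*\setminus A,\delta)=X$. Finally, since $\Delta_X\subset\e_X$ forces $A\subset B(A,\e_X)$, the set $L\setminus B(A,\e_X)$ is disjoint from $A$, so $L^*\setminus A=L\setminus B(A,\e_X)$, which finishes the proof.

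I do not expect any serious obstacle; the whole argument is a direct set-theoretic manipulation with the coarse-structure axioms. The only non-obvious ingredient is the choice of the replacement set $L^*$: substituting $A$ for the unwanted piece $L\cap B(A,\e_X)$ yields a large set whose removal of $A$ recovers precisely the target set $L\setminus B(A,\e_X)$, which is exactly the configuration in which the definition of smallness can be applied.
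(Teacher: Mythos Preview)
Your proof is correct and follows essentially the same approach as the paper: both introduce the auxiliary set $L'=(L\setminus B(A,\e_X))\cup A$, verify $L\subset B(L',\e_X)$ so that $L'$ is large, and then apply smallness of $A$ together with the identity $L'\setminus A=L\setminus B(A,\e_X)$. Your write-up simply spells out a few routine details (the composition of entourages, why $L\setminus B(A,\e_X)$ is disjoint from $A$) that the paper leaves implicit.
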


\begin{proof} The ``if'' part is trivial. To prove the ``only if'' part, assume that the set $A$ is small in $X$. To show that $B(A,\e_X)$ is small in $X$, it is necessary to check that for each large subset $L\subset X$ the complement $L\setminus B(A,\e_X)$ is large in $X$. Consider the set $L'=(L\setminus B(A,\e_X))\cup A$ and observe that $L\subset B(L',\e_X)$ and hence $L'$ is large in $X$. Since $A$ is small, the set $L'\setminus A=L\setminus B(A,\e_X)$ is large in $X$.
\end{proof}

\begin{claim}\label{cl7d} A subset $A\subset X$ is small in $X$ if and only if its image $f(A)$ is small in $Y$.
\end{claim}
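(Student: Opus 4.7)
My plan is to combine Claim~\ref{cl7b} (``large is preserved in both directions by $f$'') with Claim~\ref{cl7c} (``$A$ small implies $B(A,\e)$ small''), together with the basic observation that $g\circ f$ and $f\circ g$ are $\eta$-close to the identity, namely $g(f(A))\subset B(A,\eta_X)$ for $A\subset X$ and $f(g(B))\subset B(B,\eta_Y)$ for $B\subset Y$. I shall also use the elementary fact that a set $S$ in a coarse space is large if and only if $B(S,\eta)$ is large for one (hence every) entourage $\eta$, which is immediate from axiom (B).

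For the forward direction I assume $A$ is small in $X$ and fix an arbitrary large set $L\subset Y$; the task is to show that $L\setminus f(A)$ is large in $Y$. From $L\subset B(f(g(L)),\eta_Y)$ the set $B(f(g(L)),\eta_Y)$ is large, so $f(g(L))$ is large in $Y$, and hence by Claim~\ref{cl7b} the set $g(L)$ is large in $X$. By Claim~\ref{cl7c} the enlargement $B(A,\eta_X)$ is small, so by definition of smallness $g(L)\setminus B(A,\eta_X)$ is large in $X$. The decisive containment is
$$g(L)\setminus B(A,\eta_X)\subset g(L\setminus f(A)),$$
because if $y\in f(A)$, say $y=f(a)$, then $g(y)=g(f(a))\in B(a,\eta_X)\subset B(A,\eta_X)$; contrapositively, $g(y)\notin B(A,\eta_X)$ forces $y\notin f(A)$. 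Consequently $g(L\setminus f(A))$ is large in $X$, and Claim~\ref{cl7b} gives that $f(g(L\setminus f(A)))$ is large in $Y$. Since this image sits inside $B(L\setminus f(A),\eta_Y)$, the latter is large, and therefore $L\setminus f(A)$ itself is large.

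The converse follows by symmetry: if $f(A)$ is small in $Y$, then applying the forward direction to the coarse equivalence $g:Y\to X$ yields that $g(f(A))$ is small in $X$. Claim~\ref{cl7c} then makes $B(g(f(A)),\eta_X)$ small, and since $A\subset B(g(f(A)),\eta_X)$, the set $A$ is a subset of a small set, hence small (the family $\mathcal S(X)$ being an ideal).

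The step I expect to require the most attention is the containment $g(L)\setminus B(A,\eta_X)\subset g(L\setminus f(A))$: without first replacing $A$ by its $\eta_X$-enlargement via Claim~\ref{cl7c}, the fibres $g^{-1}(f(A))$ could spill outside $A$ and the deduction would collapse. Everything else is a routine unpacking of the definitions of coarse equivalence, largeness, and smallness.
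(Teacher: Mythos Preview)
Your proof is correct and follows essentially the same strategy as the paper's: both use Claim~\ref{cl7b} to get $g(L)$ large, Claim~\ref{cl7c} to pass to $B(A,\eta_X)$, and then the observation $g(f(A))\subset B(A,\eta_X)$ to connect $g(L)\setminus B(A,\eta_X)$ with $L\setminus f(A)$; the converse via symmetry and $A\subset B(g(f(A)),\eta_X)$ is identical. The only cosmetic difference is that you route the key step through the containment $g(L)\setminus B(A,\eta_X)\subset g(L\setminus f(A))$ before pushing forward by $f$, whereas the paper shows directly that $f\bigl(g(L)\setminus B(A,\eta_X)\bigr)\subset B(L\setminus f(A),\eta_Y)$; these are the same observation unpacked in a different order.
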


\begin{proof} Assume that $A$ is small in $X$. To prove that $f(A)$ is small in $Y$, we need to check that for any large subset $L\subset Y$ the complement $L\setminus f(A)$ is large in $Y$. Claim~\ref{cl7b} implies that the set $g(L)$ is large in $X$. By Claim~\ref{cl7c}, the set $B(A,\eta_X)$ is small in $X$ and hence the complement $g(L)\setminus B(A,\eta_X)$ remains large in $X$. By Claim~\ref{cl7b} $f(g(L)\setminus B(A,\eta_X))$ is large in $Y$. We claim that $f(g(L)\setminus B(A,\eta_X))\subset B(L\setminus f(A),\eta_Y)$. Indeed, given point $y\in f(g(L)\setminus B(A,\eta_X))$, find a point
$x\in g(L)\setminus B(A,\eta_X)$ such that $y=f(x)$ and a point $z\in L$ such that $x=g(z)$.
We claim that $z\notin f(A)$. Assuming conversely that $z\in f(A)$, we get $x=g(z)\in g\circ f(A)\subset B(A,\eta_X)$, which contradicts the choice of $x$. So, $z\in L\setminus f(A)$ and
$y=f\circ g(z)\in B(z,\eta_Y)\subset B(L\setminus f(A),\eta_Y)$.

Taking into account that the set $f(g(L)\setminus B(A,\eta_X))\subset B(L\setminus f(A),\eta_Y)$ is large in $Y$, we conclude that the set $L\setminus f(A)$ is large in $Y$ and hence $f(A)$ is small in $Y$.

Now assume that the set $f(X)$ is small in $Y$. Then the set $g\circ f(A)$ is small in $X$ and so are  the sets $B(g\circ f(A),\eta_X)\supset A$.
\end{proof}

\section{Proof of Theorem~\ref{t1.6a}}

Let $G$ be a topological group and $\E$ be its left coarse structure. The inclusion $\mathcal D_{<}(G)\subset \mathcal S(G)$ will follow as soon as we prove that each non-small subset $A\subset G$ has asymptotic dimension $\asdim(A)=\asdim(G)$. We divide the proof of this fact into 3 steps.

\begin{claim}\label{cl1} There is an entourage $\e_A\in\E$ such that the set $G\setminus B(A,\e_A)$ is not large in $G$.
\end{claim}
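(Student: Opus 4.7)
The plan is purely a definition-chase; no topological-group structure is needed for this first claim (it goes through for any coarse space). The starting point is to unpack non-smallness of $A$: by the definition of small, there is a large subset $L\subset G$ such that $L\setminus A$ is \emph{not} large. Since $L$ is large, choose $\e_A\in\E$ with $B(L,\e_A)=G$; this is the entourage I will propose.

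Next I would establish the set-theoretic inclusion
$$G\setminus B(A,\e_A)\;\subset\;B(L\setminus A,\e_A).$$
Given $g$ in the left-hand side, the equality $G=B(L,\e_A)$ supplies some $\ell\in L$ with $(\ell,g)\in\e_A$; but $g\notin B(A,\e_A)$ forces $\ell\notin A$, so $\ell\in L\setminus A$ and $g\in B(L\setminus A,\e_A)$.

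Finally, I would argue that $B(L\setminus A,\e_A)$ is itself not large. If it were, there would be $\eta\in\E$ with $B(B(L\setminus A,\e_A),\eta)=G$; using axiom (B) to pick $\delta\in\E$ containing $\eta\circ\e_A$, we get $B(L\setminus A,\delta)=G$, contradicting that $L\setminus A$ is not large. Since subsets of non-large sets are not large (an immediate monotonicity of the $B(\,\cdot\,,\e)$ operation), the inclusion established above gives that $G\setminus B(A,\e_A)$ is not large, which is exactly what the claim asserts.

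There isn't really a main obstacle to this claim: the only point requiring a moment's care is the stability of ``not large'' under taking $\e$-neighborhoods, which is where axiom (B) (closure of $\E$ under compositions) enters. The heavier use of the topological-group structure will presumably appear in the subsequent steps of the proof of Theorem~\ref{t1.6a} that translate this non-largeness into an asymptotic-dimension lower bound on $A$.
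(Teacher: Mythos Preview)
Your proof is correct and follows essentially the same route as the paper's: both start from a large $L$ with $L\setminus A$ not large, take $\e_A$ witnessing largeness of $L$, and derive a contradiction by showing that largeness of $G\setminus B(A,\e_A)$ would force $L\setminus A$ to be large via the composition axiom. The only cosmetic difference is that you isolate the inclusion $G\setminus B(A,\e_A)\subset B(L\setminus A,\e_A)$ explicitly before contradicting, whereas the paper runs the same chain of points $x\mapsto y\mapsto z$ inside a single contradiction argument.
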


\begin{proof} Since $A$ is not small, there is a large set $L\subset X$ such that the complement $L\setminus A$ is not large. Since $L$ is large in $X$, there is an entourage $\e_A\in \E$ such that $B(L,\e_A)=G$. We claim that the set $G\setminus B(A,\e_A)$ is not large. Assuming the opposite,  we can find an  entourage $\delta\in\E$ such that $B(G\setminus B(A,\e_A),\delta)=G$. Then for each $x\in G$ the ball $B(x,\delta)$ meets $G\setminus B(A,\e_A)$ at some point $y$. By the choice of $\e_A$, the ball $B(y,\e_A)$ meets the large set $L$ at some point $z$. It follows from $y\notin B(A,\e_A)$ that $z\in L\cap B(y,\e)\subset L\cap(X\setminus A)=L\setminus A$ and hence $x\in B(L\setminus A,\e_A\circ\delta)$, which means that $L\setminus A$ is large in $X$. This is a required contradiction.
\end{proof}

\begin{claim} $\asdim(B(A,\e_A))=\asdim(A)$.
\end{claim}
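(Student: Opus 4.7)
The plan is to show that the inclusion $i: A \hookrightarrow B(A,\e_A)$ is a coarse equivalence and then invoke Proposition~\ref{coarse}(1).

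First I would define a ``nearest point projection'' $g: B(A,\e_A) \to A$ as follows. For each $x \in B(A,\e_A)$, by definition there exists some $a_x \in A$ with $(x, a_x) \in \e_A$; using choice, set $g(x) = a_x$, and moreover insist that $g(a) = a$ whenever $a \in A$. By construction, $(x, i \circ g(x)) = (x, g(x)) \in \e_A$ for every $x \in B(A,\e_A)$, and $g \circ i = \mathrm{id}_A$, which gives the two ``closeness'' conditions required in the definition of coarse equivalence (the first with the entourage $\e_A$, the second with the diagonal $\Delta_A$).

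Next I would check that $g$ is a coarse map. Given an entourage $\delta$ of $B(A,\e_A)$ (namely the restriction of some $\delta \in \E$ to $B(A,\e_A)^2$), for any $(x,y) \in \delta$ we have $(g(x), x) \in \e_A$, $(x,y) \in \delta$, and $(y, g(y)) \in \e_A$ (here we use that $\e_A$ is symmetric, an axiom of coarse structures). Composing, $(g(x), g(y)) \in \e_A \circ \delta \circ \e_A \in \E$, so the image entourage lies in $\E_A$. Since $i$ is trivially coarse, this makes $i$ a coarse equivalence between $A$ and $B(A,\e_A)$ (viewed as subspaces of $G$ with their induced coarse structures).

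Finally, by Proposition~\ref{coarse}(1), coarsely equivalent spaces have the same asymptotic dimension, so $\asdim(B(A,\e_A)) = \asdim(A)$, completing the claim. There is really no serious obstacle here: the only thing one must be careful about is the symmetry of $\e_A$ when verifying that $g$ is coarse, and the observation that the induced coarse structure on a subspace is precisely what makes the computation $\e_A \circ \delta \circ \e_A \in \E$ translate into an entourage of the subspace.
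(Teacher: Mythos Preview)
Your proof is correct and follows essentially the same approach as the paper: the paper also observes that the inclusion $i:A\to B(A,\e_A)$ is a coarse equivalence with coarse inverse $j:B(A,\e_A)\to A$ defined by choosing $j(x)\in B(x,\e_A)\cap A$, and then invokes Proposition~\ref{coarse}. You supply more detail (verifying explicitly that $g$ is coarse and that the closeness conditions hold), but the strategy is identical.
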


\begin{proof} Observe that the identity embedding $i:A\to B(A,\e_A)$ is a coarse equivalence. The coarse inverse $j:B(A,\e_A)\to A$ to $i$ can be defined by choosing a point $j(x)\in B(x,\e_A)\cap A$ for each $x\in B(A,\e_A)$. Now we equality $\asdim(B(A,\e_A))=\asdim(A)$ follows from the invariance of the asymptotic dimension under coarse equivalences, see Proposition~\ref{coarse}.
\end{proof}

\begin{claim} $\asdim(G)=\asdim(A)$.
\end{claim}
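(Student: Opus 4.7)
The inequality $\asdim(A) \le \asdim(G)$ is just monotonicity of asymptotic dimension under subspaces, so the content of the claim is the reverse inequality $\asdim(G) \le n$, where $n := \asdim(A)$; we may assume $n < \infty$. My plan is to verify this via Proposition~\ref{p2.5}, exploiting the translation invariance of the left coarse structure on $G$.

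Fix a basis entourage $\e = \e_K = \{(x,y) : x \in yK\}$ with $K \subset G$ compact symmetric and $1_G \in K$; this $\e$ is left-invariant. The preceding claim gives $\asdim(B(A,\e_A)) = n$, so Proposition~\ref{p2.5} applied to $B(A,\e_A)$ yields an entourage $\delta \in \E$, which I take (without loss of generality) to be a basis entourage $\delta = \e_M$ of $G$, such that every finite $F' \subset B(A,\e_A)$ admits a coloring $\chi_{F'} : F' \to n+1$ whose $\chi_{F'}$-monochrome $\e$-chains have diameter $\subset \delta$.

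The key step is that every finite $F \subset G$ can be left-translated inside $B(A,\e_A)$. Indeed, put $L := F \cup F^{-1} \cup \{1_G\}$, a compact symmetric subset of $G$, and observe that $B(C,\e_L) = CL$ for $C := G \setminus B(A,\e_A)$. By Claim~\ref{cl1}, $C$ is not large, so $CL \ne G$; picking any $g \in G \setminus CL$, a short check (if $gl \in C$ for some $l \in L$, then $g = (gl) l^{-1} \in CL$) gives $gL \cap C = \emptyset$, hence $gF \subset gL \subset B(A,\e_A)$. I then pull back the coloring on $gF$ to $F$ by left-translation: $\chi_F(x) := \chi_{gF}(gx)$.

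Left-translation by $g$ is an isometry of $(G,\E)$, so every $\chi_F$-monochrome $\e$-chain in $F$ is mapped to a $\chi_{gF}$-monochrome $\e$-chain in $gF \subset B(A,\e_A)$, whose diameter lies in $\delta$ by construction; left-invariance of $\delta$ transfers the bound back to the original chain in $F$. Proposition~\ref{p2.5} then delivers $\asdim(G) \le n$, closing the proof. The only delicate point is ensuring $\e$ and $\delta$ remain left-invariant, which is automatic once they are chosen from the canonical basis of entourages $\e_K$, so the obstacle reduces to careful bookkeeping rather than any substantive geometry.
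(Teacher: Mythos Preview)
Your argument is correct and follows essentially the same route as the paper: translate a finite subset of $G$ into $B(A,\e_A)$ using that $G\setminus B(A,\e_A)$ is not large, pull back a coloring, and invoke left-invariance of the basis entourages. The only cosmetic differences are that the paper quotes Theorem~\ref{p2.6} (finite $\e$-connected sets) rather than Proposition~\ref{p2.5}, and produces a single global coloring of $B(A,\e_A)$ via Proposition~\ref{p2.2} rather than per-finite-set colorings; neither change affects the substance.
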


\begin{proof} The inequality $\asdim(A)\le\asdim(G)$ is trivial. So, it suffices to check that $\asdim(A)\le n$ where $n=\asdim(A)=\asdim(B(A,\e_A))$. If $n$ is infinite, then there is nothing to prove. So, we assume that $n\in\w$.

For the proof of the inequality $\asdim(G)\le n$, we shall apply Theorem~\ref{p2.6}.
Given any $\e\in\E$ we should find $\delta\in\E$ such that for each finite $\e$-connected subset $F\subset G$  there is a coloring $\chi:F\to n+1$ such that each $\chi$-monochrome $\e$-chain $C\subset F$ has $\diam(C)\subset\delta$. By the definition of the left coarse structure $\E$ we lose no generality assuming that $\e=\{(x,y)\in G\times G:x\in yK_\e\}$ for some compact subset $K_\e=K_\e^{-1}\subset G$ containing the neutral element $1_G$ of $G$. In this case the entourage $\e$ is left invariant in the sense that for each pair $(x,y)\in\e$ and each $z\in G$ the pair $(zx,zy)$ belongs to $\e$.

 Since $\asdim_{col}(B(A,\e_A))=\asdim(B(A,\e_A))\le n$, for the entourage $\e\in\E$, there are an entourage $\delta\in\E$ and a coloring $\chi_A:B(A,\e_A)\to n+1$ such that each $\chi$-monochrome $\e$-chain $C\subset B(A,\e_A)$ has $\diam(C)\subset\delta$, see Proposition~\ref{p2.2}.
By the definition of the left coarse structure $\E$, we lose no generality assuming that $\delta=\{(x,y)\in G\times G:x\in yK_\delta\}$ for some compact set $K_\delta=K_\delta^{-1}\ni 1_G$ of $G$, which implies that the entourage $\delta$ is left invariant.

Now take any finite $\e$-connected subset $F\subset G$. Replacing $F$ by $F\cup F^{-1}\cup\{1_G\}$ we can assume that $F=F^{-1}\ni 1_G$. Since the set $G\setminus B(A,\e_A)$ is not large, there is a point $z\notin (G\setminus B(A,\e_A))F$. Then $zF^{-1}$ is disjoint with $G\setminus B(A,\e_A)$ and hence $zF=zF^{-1}\subset B(A,\e_A)$. So, it is legal to define a coloring $\chi:F\to n+1$ by the formula $\chi(x)=\chi_A(zx)$ for $x\in F$. Taking into account the left invariance of the entourages $\e$ and $\delta$, it is easy to see that each $\chi$-monochrome $\e$-chain $C\subset F$ has diameter $\diam(C)\subset\delta$.
By Propositions~\ref{p2.2} and \ref{p2.3}, $\asdim(G)=\asdim_{col}(G)\le n=\asdim(A)$.
\end{proof}

\section{Proof of Theorem~\ref{main}}\label{s:main}

We need to prove that a subset $A\subset\IR^n$ is small if and only if it has asymptotic dimension $\asdim(A)<\asdim(\IR^n)=n$. The ``if'' part of this characterization follows from the inclusion $\mathcal D_{<}(\IR^n)\subset\mathcal S(\IR^n)$ proved in  Theorem~\ref{t1.6a}. To prove the ``only if'' part, we need to recall some (standard) notions of Combinatorial Topology \cite{Pont}, \cite{Miod}.

On the Euclidean space $\IR^n$ we shall consider the metric generated by the sup-norm
$\|x\|=\max_{i\in n}|x(i)|$.

By the {\em standard $n$-dimensional simplex} we understand the compact convex subset
$$\Delta=\big\{(x_0,\dots,x_n)\in[0,1]^{n+1}:\sum_{i=0}^nx_i=1\big\}\subset\IR^{n+1}$$
of the Euclidean space $\IR^{n+1}$ endowed with the sup-norm. For each $i\le n$ by $v_i:n+1\to\{0,1\}\subset \IR$ we denote the vertex of $\Delta$ defined by $v_i^{-1}(1)=\{i\}$. For each vertex $v_i$ of $\Delta$ consider its
{\em star}
$$\St_\Delta(v_i)=\{x\in \Delta:x(i)>0\}$$and its
{\em barycentric star}
$$\St'_\Delta(v_i)=\big\{x\in\Delta:x(i)=\max_{j\le n}x(j)\big\}\subset\St_\Delta(v_i).$$
It is clear that $\bigcup_{i=0}^n \St'_\Delta(v_i)=\Delta$ while $\bigcap_{i=0}^n\St'_\Delta(v_i)=\{b_\Delta\}$ is the singleton containing the barycenter $$b_\Delta=\tfrac1{n+1}\sum_{i=0}^nv_i$$ of the simplex $\Delta$.

\begin{claim}\label{cl4} $\bigcap_{i=0}^n B(St'_\Delta(v_i),\e)\subset B(b_\Delta,n\e)$ for each positive real number $\e$.
\end{claim}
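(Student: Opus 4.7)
The plan is to establish the coordinate-wise bound $|x(i) - \tfrac{1}{n+1}| < n\e$ for every $i \in \{0, \dots, n\}$; since the sup-norm on $\IR^{n+1}$ reads off the worst coordinate, this is exactly $\|x - b_\Delta\|_\infty < n\e$, i.e.\ $x \in B(b_\Delta, n\e)$. Given $x \in \bigcap_{i=0}^n B(\St'_\Delta(v_i), \e)$, for each $i$ I pick a witness $y_i \in \St'_\Delta(v_i)$ with $\|x - y_i\|_\infty < \e$.

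The lower bound comes for free: since $y_i(i) = \max_k y_i(k)$ and the coordinates of $y_i$ are nonnegative with sum $1$, we have $y_i(i) \ge \tfrac{1}{n+1}$, hence $x(i) > y_i(i) - \e \ge \tfrac{1}{n+1} - \e$. For the upper bound I pass through the witness of a different vertex. Fix any $j \ne i$; the defining inequality $y_j(j) \ge y_j(i)$ combined with $\sum_k y_j(k) = 1$ rearranges to
$$y_j(i) \le \tfrac{1}{2}\Big(1 - \sum_{k \ne i, j} y_j(k)\Big).$$
To bound the right-hand side I invoke the lower bound just obtained: from $|x(k) - y_j(k)| < \e$ and $x(k) > \tfrac{1}{n+1} - \e$ it follows that $y_j(k) > \tfrac{1}{n+1} - 2\e$ for each of the $n-1$ indices $k \ne i, j$, so $\sum_{k \ne i, j} y_j(k) > \tfrac{n-1}{n+1} - 2(n-1)\e$. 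Substituting back produces $y_j(i) < \tfrac{1}{n+1} + (n-1)\e$, and one more application of $|x(i) - y_j(i)| < \e$ yields $x(i) < \tfrac{1}{n+1} + n\e$.

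Combining the two inequalities gives $|x(i) - \tfrac{1}{n+1}| < n\e$ for every $i$, which is the required conclusion. The degenerate case $n = 1$ falls out automatically, since then the sum $\sum_{k \ne i, j} y_j(k)$ is empty and the inequality collapses to $y_j(i) \le 1/2$, still producing $x(i) < 1/2 + \e = \tfrac{1}{n+1} + n\e$. The main subtlety I anticipate is getting the sharp constant: a purely pairwise estimate $x(i) - x(j) < 2\e$ (obtained from $y_j(i) \le y_j(j)$ together with $|x(i)-y_j(i)|, |x(j)-y_j(j)|<\e$) combined with $\sum_k x(k) \approx 1$ only delivers the cruder constant $3\e$, which is insufficient for $n = 1, 2$; the two-step refinement above, which tightens the sum of the remaining coordinates of $y_j$ by substituting their individual lower bounds, is exactly what sharpens the constant to $n\e$.
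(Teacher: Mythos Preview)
Your proof is correct. The overall strategy---establish the coordinate-wise bounds $\tfrac{1}{n+1}-\e<x(i)<\tfrac{1}{n+1}+n\e$ and read off the sup-norm conclusion---is exactly the paper's, and your lower bound is identical to theirs.

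The upper bound is where the arguments diverge. The paper dispatches it in one line: from $\sum_k x_k=1$ and the already-obtained lower bound $x_j>\tfrac{1}{n+1}-\e$ for each $j\ne i$, one gets
\[
x_i=1-\sum_{j\ne i}x_j<1-n\Big(\tfrac{1}{n+1}-\e\Big)=\tfrac{1}{n+1}+n\e.
\]
This is considerably shorter than your two-step detour through a second witness $y_j$ and the inequality $y_j(i)\le\tfrac12\big(1-\sum_{k\ne i,j}y_j(k)\big)$. The trade-off is that the paper's line tacitly uses $\sum_k x_k=1$, i.e.\ that $x$ lies in the affine hyperplane spanned by $\Delta$; the claim as literally stated does not impose this, and your argument avoids the assumption by working with $\sum_k y_j(k)=1$ instead (which holds since $y_j\in\Delta$). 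So your longer route buys a slightly stronger statement---the inclusion holds for the full $\e$-neighborhoods in $\IR^{n+1}$---though for the application in Claim~\ref{cl6} one intersects with $\sigma$ anyway, so the distinction is immaterial there.
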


\begin{proof} Given any vector $x\in \bigcap_{i=0}^n B(St'_\Delta(v_i),\e)$, for every $i\le n$ we can find a vector $y\in \St'_\Delta(v_i)$ with $\|x-y\|<\e$. Then $|x_i-y_i|\le\|x-y\|<\e$ and hence $x_i>y_i-\e=\max_{j\le n}y_j-\e\ge \frac1{n+1}-\e$. On the other hand, $$x_i=1-\sum_{j\ne i}x_j<1-\sum_{j\ne i}\big(\frac1{n+1}-\e\big)=1-\frac{n}{n+1}+n\e=\frac1{n+1}+n\e.$$ So, $\|x-b_\Delta\|<n\e$.
\end{proof}

Now we are going to generalize Claim~\ref{cl4} to arbitrary simplexes.
By an {\em n-dimensional simplex} in $\IR^n$ we understand the convex hull $\sigma=\conv(\sigma^{(0)})$ of an affinely independent subset $\sigma^{(0)}\subset\IR^n$ of cardinality $|\sigma^{(0)}|=n+1$. Each point $v\in\sigma^{(0)}$ is called a {\em vertex} of the simplex $\sigma$. The arithmetic mean
$$b_\sigma=\frac1{n+1}\sum_{v\in\sigma^{(0)}}v$$ of the vertices is called the {\em barycenter} of the simplex $\sigma$. By $\partial \sigma$ we denote the boundary of the simplex $\sigma$ in $\IR^n$.  Observe that the homothetic copy $\frac12b_\sigma+\frac12\sigma=\{\frac12b_\sigma+\frac12x:x\in\sigma\}$ of $\sigma$ is contained in the interior $\sigma\setminus\partial\sigma$ of $\sigma$.
For each vertex $v\in\sigma^{(0)}$ let $$\St_\sigma(v)=\sigma\setminus\conv(\sigma^{(0)}{\setminus}\{v\})$$ be the {\em star} of $v$ in $\sigma$.
\smallskip

In fact, $n$-dimensional simplexes can be alternatively defined an images of the standard $n$-dimensional simplex $\Delta$ under injective affine maps $f:\Delta\to\IR^n$.

A map $f:\Delta\to \IR^n$ is called {\em affine} if $f(tx+(1-t)y)=tf(x)+(1-t)f(y)$ for any points $x,y\in\Delta$ and a real number $t\in[0,1]$. It is well-known that each affine function $f:\Delta\to\IR^n$ is uniquely defined by its restriction $f|\Delta^{(0)}$ to the set $\Delta^{(0)}=\{v_i\}_{i\le n}$ of vertices of $\Delta$.

A map $f:\Delta\to\IR^n$ will be called {\em $b_\Delta$-affine} if for every $i\le n$ the restriction $f|\conv(\{b_\Delta\}{\cup}\Delta^{(0)}{\setminus}\{v_i\})$ is affine. A $b_\Delta$-affine function $f:\Delta\to\IR^n$ is uniquely determined by its restriction $f|\Delta^{(0)}\cup\{b_\Delta\}$.

A function $f:X\to Y$ between metric spaces $(X,d_X)$ and $(Y,d_Y)$ is called {\em Lipschitz} if it its {\em Lipschitz constant} $$\Lip(f)=\sup\Big\{\frac{d_Y(f(x),f(x'))}{d_X(x,x')}:x,x'\in X,\;\;x\ne x'\Big\}$$is finite. A bijective function $f:X\to Y$ is {\em bi-Lipschitz} if $f$ and $f^{-1}$ are Lipschitz.

\begin{claim}\label{cl5} For any $n$-dimensional simplex $\sigma$ in $\IR^n$ there is a real constant $L$ such that each $b_\Delta$-affine function $f:\Delta\to\sigma$ with $f(\Delta^{(0)})=\sigma^{(0)}$ and $f(b)\in\frac12b_\sigma+\frac12\sigma$  is bijective, bi-Lipschitz and has $\Lip(f)\cdot \Lip(f^{-1})\le L$.
\end{claim}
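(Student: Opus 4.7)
The plan is to parametrize all such $f$ by the single point $p=f(b_\Delta)$, which lives in the compact set $K:=\frac12 b_\sigma+\frac12\sigma$, and then obtain the uniform Lipschitz bound by a standard compactness/continuity argument applied to the $n+1$ affine pieces. There are only finitely many bijections $\Delta^{(0)}\to\sigma^{(0)}$, so we may fix one such bijection $v_i\mapsto w_i$ (and later take the maximum of $L$ over these $(n+1)!$ labellings).

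First I would check that $K$ is a compact subset of the interior of $\sigma$: for each supporting hyperplane $H$ of a facet of $\sigma$ we have $\mathrm{dist}(b_\sigma,H)\ge d_0>0$, whence every point $\tfrac12b_\sigma+\tfrac12 y$ with $y\in\sigma$ has distance at least $\tfrac12d_0$ from $H$. So for every $p\in K$ and every $i$, the $n+1$ points $\{p\}\cup(\sigma^{(0)}\setminus\{w_i\})$ are affinely independent and span a non-degenerate $n$-simplex $Q_i(p)\subset\sigma$. Correspondingly, the $n+1$ closed simplexes $P_i:=\conv(\{b_\Delta\}\cup\Delta^{(0)}\setminus\{v_i\})$ are non-degenerate, cover $\Delta$ by coning from the interior point $b_\Delta$, and pairwise intersect only in lower-dimensional faces of the form $\conv(\{b_\Delta\}\cup\Delta^{(0)}\setminus\{v_i,v_j\})$.

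Next, for fixed $p\in K$, write $f=f_p$. On each $P_i$ there is a unique affine extension sending $b_\Delta\mapsto p$ and $v_j\mapsto w_j$ for $j\ne i$; on the intersection $P_i\cap P_j$ these two extensions agree on the vertex set $\{b_\Delta\}\cup\Delta^{(0)}\setminus\{v_i,v_j\}$ and hence on the whole face. Thus $f_p$ is a well-defined continuous piecewise affine map $\Delta\to\sigma$ with $f_p(P_i)=Q_i(p)$. Since $p$ lies in the interior of $\sigma$, the $Q_i(p)$ cover $\sigma$ and have pairwise disjoint interiors; the same holds for the $P_i$ in $\Delta$. As each $f_p|_{P_i}\colon P_i\to Q_i(p)$ is an affine bijection between non-degenerate $n$-simplexes, a standard argument shows that $f_p$ is a bijection $\Delta\to\sigma$.

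It remains to bound $\Lip(f_p)$ and $\Lip(f_p^{-1})$ uniformly in $p\in K$. Let $A_i(p)$ denote the linear part of the affine map $f_p|_{P_i}$; its entries are rational functions of the coordinates of $p$ whose denominator (a determinant that is nonzero precisely when $Q_i(p)$ is non-degenerate) does not vanish on the compact set $K$. Hence $p\mapsto\|A_i(p)\|$ and $p\mapsto\|A_i(p)^{-1}\|$ are continuous on $K$, and so attain maxima there. A continuous piecewise affine map on a convex domain whose pieces are convex satisfies $\Lip(f_p)=\max_i\Lip(f_p|_{P_i})$: for any $x,y\in\Delta$ the segment $[x,y]$ meets finitely many $P_i$, and a telescoping estimate along the intersection points gives the inequality, the reverse being obvious. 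The same is true on $\sigma$ for $f_p^{-1}$. Setting
\[
L:=\Bigl(\max_{i,\,p\in K}\|A_i(p)\|\Bigr)\cdot\Bigl(\max_{i,\,p\in K}\|A_i(p)^{-1}\|\Bigr),
\]
(and taking the maximum over the finitely many labellings $v_i\mapsto w_i$) yields the required uniform constant.

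The only genuine obstacle is verifying the piecewise-to-global Lipschitz identity and the continuity/invertibility of $A_i(p)$ on $K$; everything else is routine affine geometry once the compactness setup is in place.
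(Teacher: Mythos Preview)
Your proposal is correct and follows essentially the same compactness-and-continuity approach that the paper sketches in a single sentence: the paper merely notes that each such $f$ is Lipschitz with $\Lip(f)$ depending continuously on $f(b_\Delta)$, and you have filled in the details (the piecewise-affine structure on the cones $P_i$, bijectivity, the telescoping argument giving $\Lip(f_p)=\max_i\Lip(f_p|_{P_i})$, the analogous bound for $f_p^{-1}$, and the finitely many vertex labellings). There is no substantive difference in strategy.
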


This claim can be easily derived from the fact that each $b_\Delta$-affine function $f:\Delta\to\sigma$ with $f(\Delta^{(0)})=\sigma^{(0)}$ is Lipschitz and its Lipschitz constant $\Lip(f)$ depends continuously on $f(b_\Delta)$.

Given an $n$-dimensional simplex $\sigma\subset\IR^n$ and a point $b'\in\sigma\setminus\partial\sigma$ in its interior, fix a $b_\Delta$-affine function $f:\Delta\to\sigma$ such that $f(\Delta^{(0)})=\sigma^{(0)}$ and $f(b_\Delta)=b'$. For each vertex $v\in\sigma^{(0)}$ consider its {\em $b'$-barycentric star}  $$\St'_{\sigma,b'}(v)=f\big(\St'_\Delta(f^{-1}(v))\big)\subset\St_\sigma(v).$$ It is easy to see that the set $\St_{\sigma,b'}(v)$ does not depend on the choice of the $b_\Delta$-affine function $f$.

\begin{claim}\label{cl6} For any $n$-dimensional simplex $\sigma$ in $\IR^n$ there is a real constant $L$ such that for each point $b'\in\frac12b_\sigma+\frac12\sigma$ and each $\e>0$ we get $\sigma\cap \bigcap_{v\in\sigma^{(0)}}B(\St'_{\sigma,b'}(v),\e)\subset B(b',L\e)$.
\end{claim}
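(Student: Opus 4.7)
The plan is to pull the situation back to the standard simplex $\Delta$ via a $b_\Delta$-affine bijection and then apply Claim~\ref{cl4}. Given $\sigma$ and $b'\in\frac12b_\sigma+\frac12\sigma$, I would fix the unique $b_\Delta$-affine map $f:\Delta\to\sigma$ with $f(\Delta^{(0)})=\sigma^{(0)}$ and $f(b_\Delta)=b'$. By the very definition of the $b'$-barycentric star, $f(\St'_\Delta(f^{-1}(v)))=\St'_{\sigma,b'}(v)$ for every vertex $v\in\sigma^{(0)}$. By Claim~\ref{cl5}, applied to $\sigma$, there is a constant $L_0$ (depending only on $\sigma$, not on $b'$) such that every such $f$ is bijective and bi-Lipschitz with $\Lip(f)\cdot\Lip(f^{-1})\le L_0$. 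This uniformity in $b'$ is exactly what makes the constant $L$ in the conclusion depend only on $\sigma$.

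Next, I would take an arbitrary point $x\in\sigma\cap\bigcap_{v\in\sigma^{(0)}}B(\St'_{\sigma,b'}(v),\e)$ and, for each vertex $v$, pick $y_v\in\St'_{\sigma,b'}(v)$ with $\|x-y_v\|<\e$. Then $f^{-1}(y_v)\in\St'_\Delta(f^{-1}(v))$ and
\[
\|f^{-1}(x)-f^{-1}(y_v)\|\le\Lip(f^{-1})\cdot\|x-y_v\|<\Lip(f^{-1})\,\e.
\]
Setting $\e'=\Lip(f^{-1})\,\e$, this exactly means that $f^{-1}(x)\in\bigcap_{i=0}^{n}B(\St'_\Delta(v_i),\e')$.

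Then Claim~\ref{cl4} gives $\|f^{-1}(x)-b_\Delta\|<n\e'=n\Lip(f^{-1})\,\e$. Applying $f$, using $f(b_\Delta)=b'$, and invoking the Lipschitz constant of $f$, I obtain
\[
\|x-b'\|=\|f(f^{-1}(x))-f(b_\Delta)\|\le\Lip(f)\cdot n\Lip(f^{-1})\,\e\le nL_0\,\e,
\]
so $x\in B(b',nL_0\e)$. Thus $L:=nL_0$ works, and since $L_0$ came from Claim~\ref{cl5} and depends only on $\sigma$, so does $L$.

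There is essentially no obstacle: the only nontrivial input is the uniform bi-Lipschitz bound from Claim~\ref{cl5}, which is precisely what was proved in order to make this pull-back argument work. The rest is a routine change of scale combined with Claim~\ref{cl4}.
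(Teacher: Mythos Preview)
Your proof is correct and follows essentially the same approach as the paper: pull back to the standard simplex via the $b_\Delta$-affine bijection $f$ from Claim~\ref{cl5}, apply Claim~\ref{cl4}, and push forward, with $L=nL_0$ (the paper writes $L=nC$). The only cosmetic differences are that the paper records the argument as a chain of set inclusions rather than pointwise, and that such an $f$ is not literally unique (there are $(n{+}1)!$ choices of the vertex bijection), though any choice works.
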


\begin{proof} By Claim~\ref{cl5}, there is a real constant $C$ such that each bijective $b_\Delta$-affine function $f:\Delta\to\sigma$ with $f(\Delta^{(0)})=\sigma^{(0)}$ and $f(b_\Delta)\in\frac12b_\sigma+\frac12\sigma$ has $\Lip(f)\cdot \Lip(f^{-1})\le C$. Put $L=nC$. Given any point $b'\in\frac12b_\sigma+\frac12\sigma$,
 choose a bijective $b_\Delta$-affine function $f:\Delta\to\sigma$ such that $f(\Delta^{(0)})=\sigma^{(0)}$ and $f(b_\sigma)=b'$. The choice of $C$ guarantees that $\Lip(f)\cdot\Lip(f^{-1})\le C$.
Now observe that
$$
\begin{aligned}
\sigma\cap\bigcap_{v\in\sigma^{(0)}}B(\St'_{\sigma,b'}(v),\e)
&=\bigcap_{v\in\sigma^{(0)}}f\circ f^{-1}\big(B(\St'_{\sigma,b'}(v),\e)\big)\subset
\bigcap_{v\in\sigma^{(0)}}f\big(B(f^{-1}(\St'_{\sigma,b'}(v)),\Lip(f^{-1})\e)\big)=\\
&=\bigcap_{v\in\sigma^{(0)}}f\big(B(\St'_{\Delta}(f^{-1}(v)),\Lip(f^{-1})\e)\big)=
f\Big(\bigcap_{v\in\Delta^{(0)}}B\big(\St'_{\Delta}(v),\Lip(f^{-1})\e\big)\Big)\subset\\
&\subset f\big(B(b_\Delta,n\Lip(f^{-1})\e)\big)\subset B\big(f(b_\Delta),\Lip(f)\Lip(f^{-1})n\e\big)=B(b',Cn\e)=B(b',L\e).
\end{aligned}
$$
\end{proof}

Now consider the binary unit cube $K=\{0,1\}^n\subset\IR^n$ endowed with the partial  ordering $\le$ defined by $x\le y$ iff $x(i)\le y(i)$ for all $i<n$. Given two vectors $x,y\in\{0,1\}^n$, we write $x<y$ if $x\le y$ and $x\ne y$.

For every increasing chain $v_0<v_1<\ldots<v_n$ of points of the binary cube $K=\{0,1\}^n$, consider the simplex $\conv\{v_0,\ldots,v\}$ and let $\T_K$ be the (finite) set of these simplexes.  Next, consider the family $\Tau=\{\sigma+z:\sigma\in\Tau_K,\;z\in \IZ^n\}$ of translations of the simplexes from the family $\Tau_K$, and observe that $\bigcup\Tau=\IR^n$. For each point $v\in\IZ^n$ let
$$\St_{\Tau}(v)=\bigcup\{\St_\sigma(v):v\in\sigma\in\Tau\}$$be the {\em $\Tau$-star} of $v$ in the triangulation $\Tau$ of the space $\IR^n$.
\smallskip

Now we are able to prove the ``only if'' part of Theorem~\ref{main}. Assume that a subset $A\subset\IR^n$ is small. Then there is a function $\varphi:(0,\infty)\to (0,\infty)$ such that for each $\delta\in(0,\infty)$ and a point $x\in\IR^n$ there is a point $y\in\IR^n$ with $B(y,\delta)\subset B(x,\varphi(\delta))\setminus A$. The inequality $\asdim(A)<n$ will follow as soon as given any $\delta<\infty$ we construct a cover $\U$ of $A$ with finite $\mesh(\U)=\sup_{U\in\U}\diam(U)$ such that each $\delta$-ball $B(a,\delta)$, $a\in A$, meets at most $n$ elements of the cover $\U$.

By Claim~\ref{cl6}, there is a constant $L$ such that for each simplex $\sigma\in \Tau$, each point $b'\in \frac12 b_\sigma+\frac12\sigma$ and each $\e>0$ we get $\sigma\cap\bigcap\limits_{v\in\sigma^{(0)}}B(\St'_{\sigma,b'}(v),\e)\subset B(b',L\e)$.

Given any $\delta<\infty$, choose $\e>0$ so small that
for any simplex $\sigma\in\Tau$ the following conditions hold:
\begin{enumerate}
\item $B\big(b_\sigma,\e\varphi(L\delta)\big)\subset \frac12b_\sigma+\frac12\sigma$;
\item for any $b'\in\frac12b_\sigma+\frac12\sigma$ and any vertex $v\in\sigma^{(0)}$ the $2\e\delta$-neighborhood $B(\St'_{\sigma,b'}(v),2\e\delta)$ lies in the $\Tau$-star $\St_\Tau(v)$ of $v$.
\end{enumerate}

Now consider the closed cover $$\widetilde\Tau=\{\e^{-1}\sigma:\sigma\in\Tau\}$$of the space $\IR^n$ and observe that for each simplex $\sigma\in\widetilde\Tau$ we get
\begin{itemize}
\item[$(1_\e)$] $B\big(b_\sigma,\varphi(L\delta)\big)\subset \frac12b_\sigma+\frac12\sigma$;
\item[$(2_\e)$]  for any $b'\in\frac12b_\sigma+\frac12\sigma$ and any vertex $v\in\sigma^{(0)}$ the $2\delta$-neighborhood $B(\St'_{\sigma,b'}(v),2\delta)$ lies in the $\widetilde\Tau$-star $\St_{\widetilde\Tau}(v)$ of $v$.
\end{itemize}

By the choice of the function $\varphi$, for each simplex $\sigma\in\widetilde\Tau$, there is a point $b_{\sigma}'\in\IR^n$ such that $B(b_{\sigma}',L\delta)\subset B(b_\sigma,\varphi(L\delta))\setminus A$. The condition $(1_\e)$ guarantees that $$b_{\sigma}'\in B(b_\sigma,\varphi(L\delta)) \subset \tfrac12b_\sigma+\tfrac12\sigma.$$

For every point $v\in \frac1\e\IZ^n$ consider the set
$$\St'(v)=\bigcup\{\St_{\sigma,b'_\sigma}(v):\sigma\in\widetilde\Tau,\;v\in\sigma^{(0)}\}\subset \St'_{\widetilde\Tau}(v)$$ and observe that $\U=\{\St'(v):v\in{\e^{-1}}\IZ^n\}$ is a cover of the Euclidean space $\IR^n$. It follows that
$$\mesh(\U)=\sup_{v\in\e^{-1}\IZ^n}\diam(\St'(v))\le 2\sup_{v\in\e^{-1}\IZ^n}\diam(\sigma)\le 2\e^{-1}\diam([0,1]^n)<\infty.$$
It remains to check that each ball $B(a,\delta)$, $a\in A$, meets at most $n$ sets $U\in\U$.

Assume conversely that there are a point $a\in A$ and a set $V\subset\e^{-1}\IZ^n$ of cardinality $|V|=n+1$ such that $B(a,\delta)\cap \St'(v)\ne\emptyset$ for each $v\in V$. Then $a\in\bigcap_{v\in V}B(\St'(v),\delta)$. It follows from $a\in \bigcap_{v\in V}B(\St'(v),\delta)\subset\bigcap_{v\in V}\St_{\widetilde\Tau}(v)$ that $V$ coincides with the set $\sigma^{(0)}$ of vertices of some simplex $\sigma\in\widetilde\Tau$ and $a$ lies in the interior of the simplex $\sigma$.

Next, we show that $a\in B(\St'_{\sigma,b_\sigma'}(v),\delta)$ for each $v\in V$. In the opposite case, $a\in B(\St'_{\tau,b_\tau'}(v),\delta)\subset B(\tau,\delta)$ for some simplex $\tau\in\widetilde\Tau\setminus\{\sigma\}$ such that $v\in\tau^{(0)}\setminus\sigma^{(0)}$.
Choose a vertex $u\in\sigma^{(0)}\setminus\tau^{(0)}$ and observe that the condition $(2_\e)$ implies that $a\in B(\St'_{\sigma,b_\sigma'},\delta)\cap B(\tau,\delta)=\emptyset$,
which is a contradiction.

Finally, the choice of $L$ and $b_\sigma'$ yields  the desired contradiction $$a\in\sigma\cap \bigcap_{v\in\sigma^{(0)}}B(\St'_{\sigma,b_\sigma'}(v),\delta)\subset B(b_\sigma',L\delta)\subset \IR^n\setminus A,$$ completing the proof of the theorem.

\section{Proof of Theorem~\ref{t1.7}}\label{s:t1.7}

Given an Abelian locally compact topological group $G$ endowed with its left coarse structure, we need to prove the equivalence of the following statements:
\begin{enumerate}
\item $\mathcal S(G)=\mathcal D_<(G)$;
\item $G$ is compactly generated;
\item $G$ is coarsely equivalent to an Euclidean space $\IR^n$ for some $n\in\w$.
\end{enumerate}

We shall prove the implications $(1)\Ra(2)\Ra(3)\Ra(1)$.
The implication $(3)\Ra(1)$ follows from Corollary~\ref{c1.5}.

To prove that $(2)\Ra(3)$, assume that the group $G$ is compactly generated. By Theorem 24 \cite[p.85]{Mor}, $G$ is topologically isomorphic to the direct sum $\IR^n\times\IZ^m\times K$ for some $n,m\in\w$ and a compact subgroup $K\subset G$. Since the projection $\IR^n\times\IZ^m\times K\to \IR^n\times\IZ^m$ and the embedding $\IZ^n\times\IZ^m\to \IR^n\times\IZ^m$ are coarse equivalences, we conclude that $G$ is coarsely equivalent to $\IZ^{n+m}$ and to $\IR^{n+m}$.

To prove that $(1)\Ra(2)$, assume that $\mathcal S(G)=\mathcal D_{<}(G)$.
First we prove that $G$ has finite asymptotic dimension. By the Principal Structure Theorem 25 \cite[p.26]{Mor}, $G$ contains an open subgroup $G_0$ that is topologically isomorphic to $\IR^n\times K$ for some $n\in\w$ and some compact subgroup $K$ of $G_0$. The subgroup $G_0$ has asymptotic dimension $\asdim(G_0)=\asdim(\IR^n)=n<\infty$. If $\asdim(G)=\infty$, then the quotient group $G/G_0$ has infinite asymptotic dimension and hence has infinite free rank. Then the group $G/G_0$ contains a subgroup isomorphic to the free abelian group $\oplus^\w\IZ$ with countably many generators. It follows that $G$ also contains a discrete subgroup $H$ isomorphic to $\oplus^\w\IZ$.
Replacing $H$ by a smaller subgroup, if necessary, we can assume that $H$ has infinite index in $G$ and hence is small in $G$. Since $\asdim(H)=\infty=\asdim(G)$, we conclude that $\mathcal S(G)\ne\mathcal D_<(G)$, which is a desired contradiction showing that $\asdim(G)<\infty$.

By Theorem~\ref{t3.1}, there is a compactly generated subgroup $H\subset G$ with $\asdim(H)=\asdim(G)$. Since $H\notin \mathcal D_{<}(G)=\mathcal S(G)$, the subset $H$ is not small in $G$. Repeating the proof of Claim~\ref{cl1}, we can show that the set $G\setminus B(H,\e)$ is not large for some entourage $\e\in\E$. By the definition of the left coarse structure $\E$, there is a compact subset $K\subset G$ such that $B(H,\e)\subset HK$. We claim that $K^{-1}HK=G$. Assuming the opposite, we can find a point $x\in G\setminus K^{-1}HK$ and consider the finite set $F=\{x,x^{-1},xx^{-1}\}=F^{-1}$. Since the set $G\setminus HK$ is not large, there is a point $z\in (G\setminus HK)F$. For this point $z$ we get $zF\cap (G\setminus HK)=\emptyset$ and hence $z\in zF\subset HK$. Then $x\in z^{-1}zF\subset z^{-1}HK\subset K^{-1}HHK=K^{-1}HK$, which is a contradiction. Now the compact generacy of the subgroup $H$ implies the compact generacy of the group $G=K^{-1}HK$.

\smallskip


\begin{thebibliography}{}

\bibitem{BZH} T.~Banakh, J.~Higes, I.~Zarichnyi, {\em The coarse
classification of countable abelian groups}, Trans. Amer. Math. Soc. {\bf 362}:9 (2010), 4755--4780.

\bibitem{BL} T.~Banakh, N.~Lyaskovska, {\em Completeness of
translation-invariant ideals in groups}, Ukr. Mat. Zh. {\bf 62}:8 (2010), 1022--1031.

\bibitem{Baum} G.~Baumslag, {\em Wreath products and finitely presented groups}, Math. Z. {\bf 75} (1960/1961), 22--28.


\bibitem{BM} A.~Bella, V.I.~Malykhin, {\em On certain subsets of a
group II}, Q\&A in General Topology 19 (2001), 81--94.

\bibitem{BS} S.~Buyalo, V.~Schroeder, {\em Elements of Asymptotic Geomerty},
EMS, Z\"urich, 2007.

\bibitem{CZ} V.~Chatyrko, M.~Zarichnyi, {\em A remark on asymptotic dimension and digital dimension of finite metric spaces}, Mat. Stud. {\bf 27:1} (2007), 100--104.

\bibitem{DS} A.~Dranishnikov, J.~Smith, {\em Asymptotic dimension of discrete groups}, Fund. Math. {\bf 189}:1 (2006), 27--34.

\bibitem{En} R.~Engelking, {\em General Topology}, Sigma Series in Pure Mathematics, 6. Heldermann Verlag, Berlin, 1989.


\bibitem{Gen} T.~Gentimis, {\em Asymptotic dimension of finitely
presented groups}, Proc. Amer. Math. Soc. {\bf 136} (2008), 4103--4110.


\bibitem{Miod} J.~Mioduszewski, {\em Wyk\l ady z Topologii. Topologia przestrzeni Euklidesowych}, Wydawnictwo Uniwersytetu \'Sl\c askiego, Katowice, 1994 (in Polish).

\bibitem{Mor} S.~Morris, {\em Pontryagin duality and the structure of locally compact abelian groups}, Cambridge University Press, Cambridge-New York-Melbourne, 1977.

\bibitem{Pont} L.S.~Pontryagin, {\em Foundations of Combinatorial Topology}, ``Nauka'', Moscow, 1986 (in Russian).

\bibitem{PB} I.~Protasov, T.~Banakh, {\em Ball structures and
colorings of graphs and groups},  (Matem. Studii. Monograph
Series. 11), VNTL Publ. 2003.

\bibitem{Roe} J.~Roe, {\em Lectures on Coarse Geometry},
Amer. Math. Soc., Providence, RI, 2003.

\end{thebibliography}
\end{document}